\newtheorem{theorem}{Theorem}
\newtheorem{lemma}{Lemma}
\theoremstyle{definition}
\newtheorem{definition}{Definition}
\newtheorem{remark}{Remark}
\theoremstyle{plain}
\newtheorem{corollary}{Corollary}
\newcommand{\vt}{\vspace{.1cm}}
\newcommand{\R}{\mathbb{R} }
\newcommand{\q}{\mathbb{Q}_{\epsilon}^n }
\newcommand{\h}{\mathbb{H} }
\newcommand{\s}{\mathbb{S}}
\renewcommand{\rho}{\varrho}
\renewcommand{\theta}{\varTheta}
\renewcommand{\Theta}{\varTheta}
\renewcommand{\Sigma}{\varSigma}
\renewcommand{\tau}{\uptau}
\newcommand{\overbar}[1]{\mkern 1.5mu\overline{\mkern-1.5mu#1\mkern-1.5mu}\mkern 1.5mu}
\newcommand{\transv}{\mathrel{\text{\tpitchfork}}}
\newcommand{\tpitchfork}{%
  \vbox{
    \baselineskip\z@skip
    \lineskip-.52ex
    \lineskiplimit\maxdimen
    \m@th
    \ialign{##\crcr\hidewidth\smash{$-$}\hidewidth\crcr$\pitchfork$\crcr}
  }%
}
\begin{document}

\title[Totally Umbilical Hypersurfaces]
{Totally Umbilical Hypersurfaces \\ of   Product Spaces}
\author{Ronaldo F. de Lima \and João Paulo dos Santos}
\address[A1]{Departamento de Matem\'atica - Universidade Federal do Rio Grande do Norte}
\email{ronaldo@ccet.ufrn.br}
\address[A2]{Departamento de Matem\'atica - Universidade de Brasília}
\email{joaopsantos@unb.br}
\subjclass[2020]{53B25 (primary), 53C24,  53C42 (secondary).}
\keywords{umbilical  --  product space -- warped product.}

\maketitle

\begin{abstract}
Given a Riemannian manifold $M,$   and an open interval $I\subset\mathbb{R},$
we characterize nontrivial  totally umbilical hypersurfaces of the product
$M\times I$ --- as well as of  warped products $I\times_\omega M$ --- as those which are local
graphs built on isoparametric families of totally umbilical hypersurfaces of $M.$
By  means of this characterization, we
fully extend to $\mathbb{S}^n\times\mathbb{R}$ and $\mathbb{H}^n\times\mathbb{R}$ the
results by Souam and Toubiana on the classification of totally umbilical hypersurfaces
of $\mathbb{S}^2\times\mathbb{R}$ and $\mathbb{H}^2\times\mathbb{R}.$ It is also shown
that an analogous classification holds for
arbitrary warped products $I\times_\omega\mathbb{S}^n$ and $I\times_\omega\mathbb{H}^n.$
\end{abstract}

\section{Introduction}
In submanifold theory, one of the primary concerns is the determination of the totally
umbilical hypersurfaces of a given ambient space $\overbar M.$ Here, we consider this problem when
$\overbar M$ is a product manifold $M\times I,$ and  more generally, a warped product $I\times_\omega M,$ where
$I\subset\R$ is an open interval, and $M$ is a Riemannian manifold.

Trivial examples of totally umbilical hypersurfaces of $M\times I$, which are in fact totally geodesic,
are the horizontal sections
$M\times\{t\},\, \, t\in I,$ and the vertical cylinders over totally geodesic hypersurfaces of
$M$ (if any).  Our first main result establishes that
nontrivial totally umbilical (resp. totally geodesic) hypersurfaces exist in $M\times I$ if and only
if $M$ admits local isoparametric families of totally umbilical (resp. totally geodesic) hypersurfaces. (Recall that a
family of hypersurfaces is called \emph{isoparametric}
if its  members are parallel  and have constant mean curvature.)

More precisely, we show that  a nontrivial totally umbilical (resp. totally geodesic) hypersurface of $M\times\R$
is necessarily a local graph over an open set of $M,$ whose level sets constitute an isoparametric
family of totally umbilical (resp. totally geodesic) hypersurfaces. Conversely, any such graph
is totally umbilical (resp. totally geodesic) in $M\times\R.$ This result  extends naturally to warped products
$I\times_\omega M$, since such a manifold is conformal to  $M\times F(I),$
where $F'=1/\omega,$ and umbilicity is preserved under conformal diffeomorphisms.
As a first consequence of  these results, we recover the characterization
of totally umbilical hypersurfaces of $M\times I$ and $I\times_\omega M$ given
by Souam and Van der Veken in \cite{souam-veken}.

In \cite{souam-toubiana}, Souam and Toubiana  classified the
totally umbilical hypersurfaces of $\s^2\times\R$ and $\h^2\times\R$. They
constructed nontrivial  complete  totally umbilical hypersurfaces
in theses spaces, all  invariant by  rotational or translational isometries, and proved that
they are properly embedded, analytic, and  unique up to ambient isometries.

By means of our characterization of totally umbilical hypersurfaces of product spaces,
we not only provide  new  proofs of Souam and Toubiana's results, but also fully extend them
to $\s^n\times\R$ and $\h^n\times\R.$ In addition, we show that an analogous
classification of the totally umbilical hypersurfaces of $I\times_\omega\s^n$
and $I\times_\omega\h^n$ holds for any warping function $\omega.$

It should be mentioned that, in \cite{mendonca-tojeiro}, a complete classification
of totally umbilical hypersurfaces of
arbitrary codimension in $\s^n\times\R$ was obtained. When the codimension is one, 
this classification coincides with ours.
Nevertheless, the methods we employ here
are different from the ones applied in that work, being our proofs somewhat simpler.

The paper is organized as follows. In Section \ref{sec-preliminaries}, we introduce some notation
and formulae. In Section \ref{sec-graphs}, we discuss on graphs  of $M\times\R$ (called $(f_s,\phi)$-graphs)
whose level hypersurfaces  are parallel  in $M.$
In Section \ref{sec-lemmas}, we establish three key lemmas,
and in Section \ref{sec-umbilicalMxI} we characterize totally umbilical hypersurfaces of
$M\times\R$ and $I\times_\omega M$ by means of $(f_s,\phi)$-graphs. Finally, in Sections
\ref{sec-umbilicalQnxR} and  \ref{sec-Umbilicalwarp}, we classify the totally umbilical hypersurfaces
of $\s^n\times\R$ and $\h^n\times\R$, as well as those of $I\times_\omega\s^n$ and $I\times_\omega\h^n.$

\section{Preliminaries} \label{sec-preliminaries}
Let $\Sigma^n$, $n\ge 2,$
be an oriented hypersurface of  a Riemannian manifold $\overbar M^{n+1}.$ Set
$\overbar\nabla$ for the Levi-Civita connection of $\overbar M,$
$N$ for the unit normal field of $\Sigma,$ and   $A$ for its shape operator  with respect to
$N,$ so that
\[
AX=-\overbar\nabla_XN,  \,\, X\in T\Sigma,
\]
where  $T\Sigma$ stand for  the tangent bundle of $\Sigma$.
The principal curvatures of $\Sigma,$ that is,
the eigenvalues of the shape operator  $A,$ will be denoted by $k_1\,, \dots ,k_n$.

\begin{definition}
A hypersurface $\Sigma$ of $\overbar M$ is called \emph{totally umbilical} if its
shape operator writes as $A=\lambda \,{\rm Id},$ where $\lambda$ is a differentiable
function on $\Sigma$ (called its \emph{umbilical function}) and ${\rm Id}$ is the identity
operator on $T\Sigma.$ A totally umbilical hypersurface with null umbilical function is called
\emph{totally geodesic}.
\end{definition}

The ambient spaces $\overbar M$ we shall consider are the  Riemannian products
$M\times I$ and the warped products $I\times_\omega M$, whose  metrics are
\[
\langle\,,\,\rangle=\langle\,,\,\rangle_{M}+dt^2 \quad\text{and}\quad
\langle\,,\,\rangle=dt^2+\omega^2\langle\,,\,\rangle_{M},
\]
respectively. Here,  $M$ is an arbitrary Riemannian manifold,
$I\subset\R$ is an open interval, and $\omega$ a differentiable positive function on $I.$

Let $\overbar M$ be either of these product spaces.
We denote by $\partial_t$ the  gradient  of the
projection $\pi_{\scriptscriptstyle I}$ of $\overbar M$  on the factor $I.$
Notice that, in contrast with the Riemannian product case, in the warped product case,
$\partial_t$ is not a parallel field.

Given a hypersurface  $\Sigma$  of $\overbar M,$ its
\emph{height function} $\xi$ and its \emph{angle function} $\Theta$
are defined by the following identities:
\[
\xi(x):=\pi_{\scriptscriptstyle I}|_\Sigma \quad\text{and}\quad \Theta(x):=\langle N(x),\partial_t\rangle, \,\, x\in\Sigma.
\]

We shall denote the gradient of $\xi$ on $\Sigma$ by $T,$ so that
\begin{equation} \label{eq-gradxi}
T=\partial_t-\Theta N.
\end{equation}

\begin{definition}
A point of $\Sigma$ at which $T$ vanishes will be  called \emph{horizontal}.
\end{definition}

Given $t\in\R,$ the set  $P_t:=M\times\{t\}$ is called a \emph{horizontal hyperplane}
of $M\times\R.$ Horizontal hyperplanes are all isometric to $M$ and totally geodesic in
$M\times\R.$ In this context,
we call a transversal intersection $\Sigma_t:=\Sigma\transv P_t$ a \emph{horizontal section}
of $\Sigma.$ Any horizontal section $\Sigma_t$ is a hypersurface of $P_t$\,.
So, at any point $x\in\Sigma_t\subset\Sigma,$ the tangent space $T_x\Sigma$ of $\Sigma$ at $x$ splits
as the orthogonal sum
\begin{equation}\label{eq-sum}
T_x\Sigma=T_x\Sigma_t\oplus {\rm Span}\{T\}.
\end{equation}

Analogously, we define a \emph{vertical hyperplane}
$P_t=\{t\}\times_\omega M$ in $I\times_{\omega} M$, and a \emph{vertical section}
$\Sigma_t:=\Sigma\transv P_t$ of a hypersurface $\Sigma$ of $I\times_\omega M.$ We point out that
the vertical hyperplanes $P_t$ are totally umbilical in $I\times_\omega M$ with constant umbilical
function $\omega'(t)/\omega(t)$ (see, e.g., \cite{bishop-oneill}).

Given a totally geodesic hypersurface $\Sigma_0$ of $M,$ it is easily checked that the products
$\Sigma_0\times I$ and $I\times_\omega\Sigma_0$ are totally geodesic in
$M\times I$ and $I\times_\omega M,$ respectively. These hypersurfaces, together with
the horizontal hyperplanes of $M\times\R$ and vertical hyperplanes of $I\times_\omega M,$
will be referred as the \emph{trivial umbilical hypersurfaces} of these product spaces.

We will adopt the unified notation $\q$
for the simply connected space form of constant sectional
curvature $\epsilon\in\{1,-1\},$  so that $\mathbb Q_1^n$ is
the unit sphere $\s^n$, and $\mathbb Q_{-1}^n$ is the hyperbolic space $\h^n$.

Finally, we recall that a hypersurface $\Sigma\subset \q\times I$ is called \emph{rotational} if it
is invariant by a one-parameter group of rotations of $\q\times I$ fixing an \emph{axis} $\{o\}\times I,$
$o\in \q.$ Notice that, such a $\Sigma$ is foliated by (parts of) horizontal
$(n-1)$-spheres with center on the axis, so that
they project on a family of parallel geodesic spheres of $M$ centered at $o.$

\section{Graphs on  Parallel Hypersurfaces}  \label{sec-graphs}
In this section, we introduce  graphs in $M\times I$ which
are built on families of parallel hypersurfaces of $M.$ Through them,
we shall construct and classify totally umbilical hypersurfaces of $M\times I.$

With this purpose, consider
an isometric immersion
\[f:M_0^{n-1}\rightarrow M^n\]
between two Riemannian manifolds $M_0^{n-1}$ and $M^n,$
and suppose that there is a neighborhood $\mathscr{U}$
of $M_0$ in $TM_0^\perp$  without focal points of $f,$ that is,
the restriction of the normal exponential map $\exp^\perp_{M_0}:TM_0^\perp\rightarrow M$ to
$\mathscr{U}$ is a diffeomorphism onto its image. In this case, denoting by
$\eta$ the unit normal field  of $f,$   there is an open interval $I_0\owns 0,$
such that, for all $p\in M_0,$ the curve
\begin{equation}\label{eq-geodesic}
\gamma_p(s)=\exp_{\scriptscriptstyle M}(f(p),s\eta(p)), \, s\in I_0,
\end{equation}
is a well defined geodesic of $M$ without conjugate points. Thus,
for all $s\in I_0,$
\[
\begin{array}{cccc}
f_s: & M_0 & \rightarrow & M\\
     &  p       & \mapsto     & \gamma_p(s)
\end{array}
\]
is an immersion of $M_0$ into $M,$ which is said to be \emph{parallel} to $f.$
Observe that, given $p\in M_0$, the tangent space $f_{s_*}(T_p M_0)$ of $f_s$ at $p$ is the parallel transport of $f_{*}(T_p M_0)$ along
$\gamma_p$ from $0$ to $s.$ We also remark that,  with the induced metric,
the unit normal  $\eta_s$  of $f_s$ at $p$ is given by
\[\eta_s(p)=\gamma_p'(s).\]

\begin{definition}
Let $\phi:I_0\rightarrow \phi(I_0)\subset\R$ be an increasing diffeomorphism, i.e., $\phi'>0.$
With the above notation, we call the set
\begin{equation}\label{eq-paralleldescription1}
\Sigma:=\{(f_s(p),\phi(s))\in M\times\R\,;\, p\in M_0, \, s\in I_0\},
\end{equation}
the \emph{graph} determined by $\{f_s\,;\, s\in I_0\}$ and $\phi,$ or $(f_s,\phi)$-\emph{graph}, for short.
\end{definition}

Notice that, for a given $(f_s,\phi)$-graph $\Sigma$, and
for any $s\in I_0$\,, $f_s(M_0)$ is  the projection on $M$ of the horizontal
section $\Sigma_{\phi(s)}\subset\Sigma,$
that is, these sets are the {level hypersurfaces}
of    $\Sigma.$

For an arbitrary  point $x=(f_s(p),\phi(s))$ of such a graph $\Sigma,$ one has
\[T_x\Sigma=f_{s_*}(T_p M_0)\oplus {\rm Span}\,\{\partial_s\}, \,\,\, \partial_s=\eta_s+\phi'(s)\partial_t.\]
So, a  unit normal  to $\Sigma$ is
\begin{equation} \label{eq-normal}
N=\frac{-\phi'}{\sqrt{1+(\phi')^2}}\eta_s+\frac{1}{\sqrt{1+(\phi')^2}}\partial_t\,.
\end{equation}
In particular, its  angle function  is
\begin{equation} \label{eq-thetaparallel}
\Theta=\frac{1}{\sqrt{1+(\phi')^2}}\,\cdot
\end{equation}

As shown in  \cite[Theorem 6]{delima-roitman}), any  $(f_s,\phi)$-graph $\Sigma$
has the $T$-\emph{property}, meaning that
$T$ is a principal direction at any point of $\Sigma$.
More precisely,
\begin{equation}\label{eq-principaldirection}
AT=\frac{\phi''}{(\sqrt{1+(\phi')^2})^3}T.
\end{equation}

Conversely, any hypersurface of $M\times I$ with non vanishing angle function having $T$ as a principal direction
is given locally as an $(f_s,\phi)$-graph.

Given an 
$(f_s,\phi)$-graph $\Sigma,$
let $\{X_1\,,\dots ,X_n\}$ be
the orthonormal frame of principal directions of $\Sigma$
in which $X_n=T/\|T\|.$ In this case, 
for $1\le i\le n-1,$
the fields $X_i$  are all horizontal, that is, tangent to $M$ (cf. \eqref{eq-sum}).
Therefore, setting
\begin{equation}\label{eq-rho}
\rho:=\frac{\phi'}{\sqrt{1+(\phi')^2}}
\end{equation}
and considering \eqref{eq-normal}, we have, for all $i=1,\dots ,n-1,$ that
\[
k_i=\langle AX_i,X_i\rangle=-\langle\overbar\nabla_{X_i}N,X_i\rangle=\rho\langle\overbar\nabla_{X_i}\eta_s,X_i\rangle=-\rho k_i^s,
\]
where $k_i^s$ is the $i$-th principal curvature of $f_s\,.$ Also,
it follows from \eqref{eq-principaldirection}
that $k_n=\rho'.$ Thus, the array of principal curvatures of the $(f_s,\phi)$-graph $\Sigma$ is
\begin{equation}\label{eq-principalcurvatures}
k_i=-\rho k_i^s \,\, (1\le i\le n-1) \quad\text{and}\quad k_n=\rho'.
\end{equation}

The function defined in \eqref{eq-rho} --- to be called the $\rho$-\emph{function} of
the $(f_s,\phi)$-graph $\Sigma$ --- will play a major role in the sequel.
We remark that, up to a constant, the $\rho$-function of $\Sigma$ determines its
$\phi$-function.
Indeed, it follows from equality \eqref{eq-rho} that
\begin{equation}\label{eq-phi10}
\phi(s)=\int_{s_0}^{s}\frac{\rho(u)}{\sqrt{1-\rho^2(u)}}du+\phi(s_0), \,\,\, s_0, s\in I_0.
\end{equation}

We conclude this section by introducing  a special type of family of parallel hypersurfaces which will
be of special interest to us.

\begin{definition}
We call a  family $\{f_s:M_0\rightarrow M\,;\, s\in I_0\}$
of parallel hypersurfaces  \emph{isoparametric} if,
for each $s\in I_0,$  $f_s$ has constant mean curvature (depending on $s$).
If so, each hypersurface $f_s$ is also called \emph{isoparametric.}
\end{definition}

\section{Three Key Lemmas} \label{sec-lemmas}

The proofs of our main theorems will rely on the following three lemmas. The first
establishes conditions on the $\rho$-function of an $(f_s,\phi)$-graph under which
the resulting hypersurface is totally umbilical. The second states that any nontrivial
totally umbilical hypersurface of $M\times I$ is represented locally as such a graph.
In the third, we prove the convergence of a certain improper integral
which will be recurrent in the paper.

\begin{lemma} \label{lem-parallel}
Let $\{f_s:M_0\rightarrow M\,;\, s\in I_0\}$ be an
isoparametric family of totally umbilical hypersurfaces of $M$, and denote by $\lambda=\lambda(s)$ the
umbilical constant  of
$f_s$. Let $\rho$ be a solution of the
differential equation
\begin{equation}  \label{eq-difequation}
y'+\lambda(s)y=0, \,\,\, s\in I_0.
\end{equation}
Then, if  \,$0<\rho<1,$  the
$(f_s,\phi)$-graph $\Sigma$ determined by $\rho$ is a totally
umbilical hypersurface of the product $M\times\R.$
\end{lemma}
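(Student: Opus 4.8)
The plan is to compute the shape operator of the graph $\Sigma$ in its orthonormal frame of principal directions and to show, by means of the differential equation \eqref{eq-difequation}, that all of its eigenvalues coincide at every point. Before that, I would check that the $(f_s,\phi)$-graph \emph{determined by} $\rho$ is well defined --- which is exactly what the hypothesis $0<\rho<1$ guarantees. Indeed, solving \eqref{eq-rho} for $\phi'$ gives $\phi'=\rho/\sqrt{1-\rho^2}$, a quantity that is positive and finite precisely when $0<\rho<1$; hence \eqref{eq-phi10} produces an increasing diffeomorphism $\phi$, and the associated graph $\Sigma$ is defined. Moreover, since $\rho>0$, equation \eqref{eq-thetaparallel} gives $\Theta=1/\sqrt{1+(\phi')^2}\in(0,1)$, so that $\|T\|^2=1-\Theta^2>0$ by \eqref{eq-gradxi}. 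Thus $\Sigma$ has no horizontal points and, in view of \eqref{eq-sum}, the orthonormal frame $\{X_1,\dots,X_{n-1},X_n=T/\|T\|\}$ of principal directions is defined throughout $\Sigma$.

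Next I would read off the principal curvatures of $\Sigma$ from \eqref{eq-principalcurvatures}. Since each $f_s$ is totally umbilical with umbilical constant $\lambda(s)$, its shape operator equals $\lambda(s)\,{\rm Id}$, so every horizontal direction is principal for $f_s$ with $k_i^s=\lambda(s)$; in particular no question of aligning the horizontal principal directions of $\Sigma$ with those of $f_s$ arises, and \eqref{eq-principalcurvatures} yields
\[
k_i=-\rho\,\lambda(s)\quad(1\le i\le n-1),\qquad k_n=\rho'.
\]

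The crux of the argument is then immediate: the equation \eqref{eq-difequation} states precisely that $\rho'=-\lambda(s)\,\rho$, whence $k_n=\rho'=-\lambda(s)\,\rho=k_i$ for every $i$. Consequently all $n$ principal curvatures of $\Sigma$ agree, its shape operator equals $A=-\rho\,\lambda(s)\,{\rm Id}$, and $\Sigma$ is totally umbilical with umbilical function $\lambda_\Sigma=-\rho\,\lambda(s)$. This function is differentiable because $\rho$ solves an ordinary differential equation and $s$ is a smooth function on $\Sigma$, the parametrization $(p,s)\mapsto(f_s(p),\phi(s))$ being a diffeomorphism onto $\Sigma$.

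I do not anticipate a serious obstacle here. Once \eqref{eq-principalcurvatures} is available, the whole content of the lemma reduces to the observation that \eqref{eq-difequation} is nothing but the umbilicity condition $k_n=k_i$. The only points demanding care are bookkeeping ones: confirming that $0<\rho<1$ makes both the graph and its principal-direction frame globally well defined (no horizontal points), and that the resulting umbilical function is differentiable.
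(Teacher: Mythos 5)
Your proposal is correct and takes essentially the same route as the paper: both read off the principal curvatures from \eqref{eq-principalcurvatures} and observe that the equation \eqref{eq-difequation}, i.e.\ $\rho'=-\lambda(s)\rho$, forces $k_n=\rho'$ to equal the common horizontal curvature $-\rho\lambda(s)$. Your additional verification that $0<\rho<1$ makes the graph and its frame well defined is sound bookkeeping that the paper leaves implicit, but it does not change the argument.
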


\begin{proof}
Let $\rho$ be a solution of \eqref{eq-difequation} satisfying $0<\rho<1.$
From \eqref{eq-principalcurvatures},
we have that the first $n-1$ principal curvatures of the $(f_s,\phi)$-graph $\Sigma$ determined by $\rho$
are all equal to $-\lambda(s)\rho(s).$ However,
$k_n=\rho'=-\lambda\rho,$ which implies that
$\Sigma$ is totally umbilical in $M\times\R.$
\end{proof}

Regarding equation \eqref{eq-difequation}, recall that
its general solution is
\begin{equation}\label{eq-solutiondifequation}
\rho(s)=c\exp\left(-\int_{s_0}^s\lambda(u)du\right), \,\,\, s_0\,, s\in I_0, \,\,c\in\R.
\end{equation}

\begin{lemma}  \label{lem-localgraph}
Let $\Sigma\subset M\times I$ be a  totally umbilical hypersurface with no horizontal points and
non vanishing angle function. Then, $\Sigma$ is given locally by an $(f_s,\phi)$-graph,
where $\{f_s:M_0\rightarrow M\,;\, s\in I_0\}$
is an isoparametric family of  totally umbilical hypersurfaces of $M.$ Moreover,
the following assertions are equivalent:
\begin{itemize}[parsep=1ex]
\item[\rm i)] $\Sigma$ is totally geodesic.
\item[\rm ii)] The angle function of $\Sigma$ is locally constant.
\item[\rm iii)] $f_s$ is totally geodesic for all $s\in I_0.$
\end{itemize}
\end{lemma}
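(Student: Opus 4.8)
The plan is to first obtain the local graph representation and then read off all the stated properties from the array of principal curvatures \eqref{eq-principalcurvatures}. Since $\Sigma$ is totally umbilical, its shape operator is $A=\lambda\,{\rm Id}$, so \emph{every} tangent direction is principal; in particular $T$ is a principal direction at every point, because $T$ never vanishes under the hypothesis that $\Sigma$ has no horizontal points. Together with the assumption that the angle function is nowhere zero, the converse to the $T$-property recalled in Section~\ref{sec-graphs} then applies and shows that $\Sigma$ is locally an $(f_s,\phi)$-graph for some family of parallel hypersurfaces $\{f_s:M_0\to M\}$ and some increasing diffeomorphism $\phi$.

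Next I would extract the geometry of the $f_s$ directly from \eqref{eq-principalcurvatures}. Writing $\lambda$ for the umbilical function of $\Sigma$, total umbilicity forces $k_1=\cdots=k_n=\lambda$. The equation $k_n=\rho'$ then gives $\rho'=\lambda$; but $\rho$ is a function of $s$ alone (being determined by $\phi'$ through \eqref{eq-rho}), so this identity forces $\lambda$ itself to depend only on $s$, say $\lambda=\lambda(s)$. Substituting back into $k_i=-\rho k_i^s=\lambda$ for $1\le i\le n-1$, and using $\rho>0$, one obtains $k_i^s=-\lambda(s)/\rho(s)$ for every $i$ and at every point of $M_0$. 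Hence each $f_s$ has all of its principal curvatures equal to the single constant $-\lambda(s)/\rho(s)$; that is, $f_s$ is totally umbilical with constant umbilical function, and in particular has constant mean curvature, so the family $\{f_s\}$ is isoparametric, as required.

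For the equivalences I would route everything through the behaviour of $\rho$. From \eqref{eq-thetaparallel} and \eqref{eq-rho} one has $\Theta^2+\rho^2=1$, whence $\Theta=\sqrt{1-\rho^2}$; since $\rho\in(0,1)$, the angle function $\Theta$ is locally constant if and only if $\rho$ is, i.e.\ if and only if $\rho'=0$. On the other hand $\Sigma$ is totally geodesic precisely when $\lambda\equiv 0$, and the relation $\rho'=\lambda$ established above yields $\lambda\equiv 0\iff\rho'\equiv 0$; this gives (i)$\iff$(ii). Finally, since $\rho$ never vanishes and $n\ge 2$ guarantees at least one horizontal principal curvature $k_i^s$, the relation $k_i^s=-\lambda/\rho$ shows that $\lambda\equiv 0$ if and only if $k_i^s\equiv 0$ for all $i$, that is, if and only if every $f_s$ is totally geodesic; this is (i)$\iff$(iii).

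The only genuinely delicate point is the passage from $\lambda$ being a priori a function on all of $\Sigma$ to its depending on $s$ alone: it is the identity $\rho'=\lambda$ together with $\rho=\rho(s)$ that collapses this dependence, and this collapse is exactly what forces the $f_s$ to be umbilical \emph{with constant} umbilical function rather than merely pointwise umbilical. Everything else is a direct bookkeeping of \eqref{eq-principalcurvatures}, \eqref{eq-thetaparallel}, and \eqref{eq-rho}.
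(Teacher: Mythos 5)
Your proof is correct and follows essentially the same route as the paper: invoke the $T$-property and the converse from Section~\ref{sec-graphs} to get the local $(f_s,\phi)$-graph, then use $k_n=\rho'$ from \eqref{eq-principalcurvatures} to collapse the umbilical function to a function of $s$ alone, yielding $k_i^s=-\rho'(s)/\rho(s)$ and hence isoparametricity, with the equivalences read off from \eqref{eq-thetaparallel} and \eqref{eq-rho}. Your explicit remark about why $\lambda$ depends only on $s$ is a welcome clarification of a step the paper leaves implicit, but it is the same argument.
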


\begin{proof}
Since $\Sigma$ is umbilical with no horizontal points, it has the $T$-property.
So, by \cite[Theorem 6]{delima-roitman}, $\Sigma$ is locally an $(f_s,\phi)$-graph.
Denoting  by $k(x)$ the principal curvature of $\Sigma$ at $x=(f_s(p),\phi(s)),$ we
have from equalities \eqref{eq-principalcurvatures} that
\[
\rho'(s)=k(x)=-\rho(s)k_i^s(p) \,\, \forall i=1,\dots n-1.
\]
Thus, for all $p\in M_0$ and $i=1,\dots ,n-1,$ the following equality holds:
\[
k_i^s(p)=-\frac{\rho'(s)}{\rho(s)},
\]
which implies that $f_s$ is totally umbilical and isoparametric.

Regarding the assertions (i)--(iii), we have just to notice that, since
$k_n=\rho',$ $\Sigma$ is totally geodesic if and only if $\rho$ is constant.
From this, the last equality above,
and the identities \eqref{eq-thetaparallel} and \eqref{eq-rho},
one easily concludes that (i), (ii) and (iii) are  equivalent.
\end{proof}

\begin{lemma} \label{lem-convergenceintegral}
Let $\rho:[a,b]\rightarrow\R$ be a differentiable function such
that $0\le \rho\le 1.$ Assume that either of the  following hold:
\begin{itemize}[parsep=1ex]
\item[\rm i)] $\rho(a)\ne 1$, $\rho(b)=1,$  and \,\,$\rho'(b)>0$.
\item[\rm ii)]$\rho(a)=1$, $\rho(b)\ne 1,$ and \,\,$\rho'(a)<0$.
\end{itemize}
Under any of these conditions,  the improper integral
\[
\int_{a}^{b}\frac{\rho(s)}{\sqrt{1-\rho^2(s)}}ds
\]
is convergent.
\end{lemma}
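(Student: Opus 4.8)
The plan is to prove convergence by localizing near the single endpoint where $\rho=1$ and showing the integrand is dominated by an integrable singularity there. Since the two cases are symmetric (case (ii) reduces to case (i) by the substitution $s\mapsto a+b-s$, which reverses orientation and turns $\rho'(a)<0$ into a positive derivative at the right endpoint), I would treat case (i) in detail and remark that case (ii) follows by this reflection. In case (i), the only possible source of divergence is the point $s=b$, where $1-\rho^2(s)\to 0$; away from $b$ the integrand is continuous and hence bounded, so the integral over $[a,b-\delta]$ is finite for any small $\delta>0$ and I only need to control the tail near $b$.

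The key step is to extract the rate at which $\rho$ approaches $1$. Writing $1-\rho^2(s)=(1-\rho(s))(1+\rho(s))$, the factor $1+\rho(s)$ is bounded between $1$ and $2$ and causes no trouble, so convergence hinges on $\int^b (1-\rho(s))^{-1/2}\,ds$. Because $\rho$ is differentiable with $\rho(b)=1$ and $\rho'(b)>0$, a first-order Taylor expansion at $b$ gives
\begin{equation}\label{eq-taylorrho}
1-\rho(s)=\rho(b)-\rho(s)=\rho'(b)(b-s)+o(b-s)\quad\text{as }s\to b^-.
\end{equation}
Hence there is a constant $c>0$ and a $\delta>0$ such that $1-\rho(s)\ge c\,(b-s)$ for all $s\in[b-\delta,b]$. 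This is exactly the estimate I need, and it is the heart of the argument.

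With \eqref{eq-taylorrho} in hand, on $[b-\delta,b]$ the integrand satisfies
\[
\frac{\rho(s)}{\sqrt{1-\rho^2(s)}}\le \frac{1}{\sqrt{(1-\rho(s))(1+\rho(s))}}\le \frac{1}{\sqrt{c\,(b-s)}},
\]
using $\rho\le 1$ in the numerator and $1+\rho\ge 1$ in the denominator. Since $\int_{b-\delta}^{b}(b-s)^{-1/2}\,ds=2\sqrt{\delta}<\infty$, the comparison test yields convergence of the tail, and adding the finite contribution from $[a,b-\delta]$ completes case (i).

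The main obstacle is justifying the pointwise lower bound $1-\rho(s)\ge c\,(b-s)$ rigorously from the hypothesis $\rho'(b)>0$: the Taylor expansion \eqref{eq-taylorrho} only gives asymptotic information at the single point $b$, so I must invoke differentiability at $b$ to produce a genuine one-sided neighborhood $[b-\delta,b]$ on which the inequality holds (e.g.\ by choosing $c=\rho'(b)/2$ and using the definition of the derivative to absorb the error term). Everything else is a routine comparison estimate; no global monotonicity or sign control on $\rho'$ away from the endpoints is required, which is why the differentiability and the strict endpoint derivative are precisely the hypotheses that make the argument work.
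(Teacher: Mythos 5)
Your proposal is correct, but it takes a genuinely different route from the paper's. The paper's proof asserts the existence of constants $\delta, C>0$ with $\rho'(s)\ge C$ on $(b-\delta,b)$, bounds the integrand by $\rho'(s)/\bigl(C\sqrt{1-\rho^2(s)}\bigr)$ using $\rho\le 1$, and then performs the substitution $u=\rho(s)$ to evaluate the resulting integral in closed form, obtaining the explicit bound $\frac{1}{C}\bigl(\frac{\pi}{2}-\arcsin\rho(b-\delta)\bigr)\le\frac{\pi}{2C}$; case (ii) is declared analogous, matching your reflection reduction $s\mapsto a+b-s$. You instead dominate the integrand by the model singularity $(b-s)^{-1/2}$, via the first-order estimate $1-\rho(s)\ge c\,(b-s)$ with $c=\rho'(b)/2$, extracted purely from the definition of the derivative at the single point $b$, together with the factorization $1-\rho^2=(1-\rho)(1+\rho)$ and $1+\rho\ge 1$. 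What each approach buys: the paper's substitution is shorter and gives a clean quantitative bound, but its uniform lower bound $\rho'\ge C$ on a left neighborhood of $b$ tacitly uses continuity of $\rho'$ there --- pointwise differentiability with $\rho'(b)>0$ does not by itself force $\rho'>0$ near $b$ --- whereas your argument needs only differentiability at the endpoint itself, so it is valid verbatim under the lemma's literal hypothesis (and, as you note, requires no monotonicity of $\rho$, which the paper's change of variables implicitly relies on). Both arguments correctly isolate the singular endpoint and dispose of the compact part by continuity, so your proof stands as a slightly more elementary and more robust alternative.
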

\begin{proof}
Assume that  (i) occurs. In this case, there exist positive constants, $\delta$ and $C,$ such that
$\rho'(s)\ge C>0\,\forall s\in(b-\delta ,b).$ Therefore,
\begin{eqnarray}
\int_{b-\delta}^{b}\frac{\rho(s)ds}{\sqrt{1-\rho^2(s)}} & \le & \int_{b-\delta}^{b}\frac{\rho'(s)ds}{\rho'(s)\sqrt{1-\rho^2(s)}}
\le\frac{1}{C} \int_{\rho(b-\delta)}^{1}\frac{d\rho}{\sqrt{1-\rho^2}} \nonumber\\
            & = & \frac{1}{C}\left(\frac{\pi}{2}-\arcsin(\rho(b-\delta))\right)\le \frac{\pi}{2C}\,, \nonumber
\end{eqnarray}
which proves the case (i). The proof for the case (ii) is analogous.
\end{proof}

\section{Totally Umbilical Hypersurfaces of $M\times I$ and $I\times_\omega M.$} \label{sec-umbilicalMxI}

In this section, we give a complete characterization
of totally umbilical hypersurfaces of the product spaces $M\times I$ and $I\times_\omega M.$
To begin with, let us observe that
Lemmas \ref{lem-parallel} and \ref{lem-localgraph} immediately give the following result.

\begin{theorem} \label{th-classifMxR}
There exist nontrivial totally  umbilical (resp. totally geodesic)
hypersurfaces in $M\times I$ if and only if
$M$ admits  isoparametric families of  totally umbilical (resp. totally geodesic) hypersurfaces.
Furthermore, if $\Sigma\subset M\times\R$ is totally umbilical and connected, then $\Sigma$ is
totally geodesic if and only if its angle function is constant.
\end{theorem}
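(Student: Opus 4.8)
The plan is to organize everything around two pointwise identities on a hypersurface $\Sigma\subset M\times\R$. The first is $\|T\|^2+\Theta^2=1$, which follows at once from \eqref{eq-gradxi} together with $\|\partial_t\|=1$; it shows that the horizontal points of $\Sigma$ are exactly those where $|\Theta|=1$, while $\Theta=0$ precisely where $\|T\|=1$. The second is the formula ${\rm grad}\,\Theta=-AT$, obtained in one line by differentiating $\Theta=\langle N,\partial_t\rangle$ and using that $\partial_t$ is parallel in the Riemannian product, together with $\partial_t=T+\Theta N$. For an umbilical $\Sigma$ with $A=\lambda\,{\rm Id}$ this reads ${\rm grad}\,\Theta=-\lambda T$, and this is the engine behind the ``furthermore'' part.

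For the equivalence I would argue both directions. In the forward direction, assume $M\times I$ carries a nontrivial totally umbilical (resp. totally geodesic) hypersurface $\Sigma$, which I may take connected. The key reduction is to show that the open set $\mathscr{O}:=\{x\in\Sigma:\,0<|\Theta(x)|<1\}$ is nonempty. If it were empty, then $\|T\|^2+\Theta^2=1$ forces $\Theta\in\{-1,0,1\}$ everywhere, so by continuity and connectedness $\Theta$ is a constant in $\{-1,0,1\}$. The value $\pm1$ gives $T\equiv0$, whence $\Sigma$ is an open part of a horizontal hyperplane; the value $0$ gives $\partial_t$ tangent to $\Sigma$, and since $\partial_t$ is a unit parallel (hence geodesic) field, $\langle A\partial_t,\partial_t\rangle=\langle N,\overbar{\nabla}_{\partial_t}\partial_t\rangle=0$, so umbilicity forces $\lambda\equiv0$ and $\Sigma$ is a totally geodesic vertical cylinder $\Sigma_0\times\R$. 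In either case $\Sigma$ would be trivial, a contradiction; hence $\mathscr{O}\neq\varnothing$. On $\mathscr{O}$ the hypersurface has no horizontal points and nonvanishing angle function, so Lemma \ref{lem-localgraph} represents it as an $(f_s,\phi)$-graph over an isoparametric family of totally umbilical hypersurfaces of $M$, which is the desired family; in the totally geodesic case, implication (i)$\Rightarrow$(iii) of that lemma further yields that each $f_s$ is totally geodesic.

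For the converse, suppose $M$ admits an isoparametric family $\{f_s:M_0\to M\}$ of totally umbilical hypersurfaces with umbilical constant $\lambda(s)$. I take $\rho(s)=c\exp\!\left(-\int_{s_0}^{s}\lambda(u)\,du\right)$ as in \eqref{eq-solutiondifequation}; choosing $c>0$ small and $I_0$ a short subinterval around $s_0$, continuity of $\lambda$ guarantees $0<\rho<1$ there, so Lemma \ref{lem-parallel} produces a totally umbilical $(f_s,\phi)$-graph $\Sigma$. Its angle function $\Theta=\sqrt{1-\rho^2}$ lies in $(0,1)$, so $\Sigma$ is neither a horizontal hyperplane ($|\Theta|=1$) nor a vertical cylinder ($\Theta=0$), i.e.\ it is nontrivial. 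When the family is totally geodesic we have $\lambda\equiv0$, so $\rho\equiv c\in(0,1)$ is constant and \eqref{eq-principalcurvatures} gives $k_i=-\rho k_i^s=0$ and $k_n=\rho'=0$; thus $\Sigma$ is nontrivial and totally geodesic. This settles the ``resp.'' statement simultaneously.

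Finally, for the last assertion let $\Sigma\subset M\times\R$ be connected and totally umbilical, so that ${\rm grad}\,\Theta=-\lambda T$. If $\Sigma$ is totally geodesic then $\lambda\equiv0$, hence ${\rm grad}\,\Theta=0$ and $\Theta$ is constant on the connected $\Sigma$. Conversely, if $\Theta$ is constant then $\lambda T\equiv0$; since $\|T\|^2=1-\Theta^2$ is then constant, either $|\Theta|\equiv1$, in which case $T\equiv0$ and $\Sigma$ is an open part of a (totally geodesic) horizontal hyperplane, or $|\Theta|<1$, in which case $T$ never vanishes and $\lambda\equiv0$; in both cases $\Sigma$ is totally geodesic. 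I expect the main obstacle to be the forward direction of the equivalence, namely ruling out that a nontrivial umbilical hypersurface degenerates everywhere into the boundary regime $\Theta\in\{-1,0,1\}$, where Lemma \ref{lem-localgraph} does not apply; the identity $\|T\|^2+\Theta^2=1$ and the explicit description of the two types of trivial hypersurfaces are precisely what close this gap, while the formula ${\rm grad}\,\Theta=-\lambda T$ upgrades the \emph{local} constancy coming from Lemma \ref{lem-localgraph} to the \emph{global} constancy asserted in the theorem.
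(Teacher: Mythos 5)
Your proof is correct, and its skeleton is the paper's: Lemma \ref{lem-parallel} supplies the existence direction and Lemma \ref{lem-localgraph} the forward direction, which is precisely how the paper argues --- indeed its entire proof is the remark that the two lemmas ``immediately give'' the theorem. What you add beyond that one-liner is genuine content, and on one point your route differs in substance. First, the identity $\|T\|^2+\Theta^2=1$, together with the computation $\langle A\partial_t,\partial_t\rangle=0$ when $\Theta\equiv 0$, cleanly disposes of the degenerate regime $\Theta\in\{-1,0,1\}$, showing that a nontrivial connected umbilical hypersurface must contain points with $0<|\Theta|<1$ where Lemma \ref{lem-localgraph} actually applies; the paper leaves this reduction implicit (your identification of the $\Theta\equiv 0$ case as a vertical cylinder is asserted a bit quickly, but it is the standard consequence of $\partial_t$ being tangent and parallel). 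Second, and more substantively, your identity ${\rm grad}\,\Theta=-AT=-\lambda T$ proves the ``furthermore'' clause \emph{globally} for an arbitrary connected umbilical $\Sigma$, whereas the equivalence (i)$\Leftrightarrow$(ii) of Lemma \ref{lem-localgraph}, on which the paper implicitly relies, only gives \emph{local} constancy of the angle function and only under the hypotheses of no horizontal points and nonvanishing angle function --- hypotheses a general connected umbilical hypersurface need not satisfy everywhere. So for that clause your argument is both different from and more complete than the paper's, at the cost of a single extra computation; likewise, your small-$c$ choice in \eqref{eq-solutiondifequation} to secure $0<\rho<1$ on a short subinterval makes the existence direction airtight, a detail the paper does not spell out.
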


It is a well known fact that $\q$ does not have
isoparametric families of totally geodesic hypersurfaces. Hence, we have

\begin{corollary}
A totally geodesic hypersurface of \,$\q\times\R$ is necessarily trivial.
\end{corollary}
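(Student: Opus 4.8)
The plan is to derive the corollary as an immediate consequence of Theorem \ref{th-classifMxR}, thereby reducing the question to a purely intrinsic statement about $\q$. By that theorem, nontrivial totally geodesic hypersurfaces exist in $\q\times\R$ if and only if $\q$ admits an isoparametric family of totally geodesic hypersurfaces. Taking the contrapositive, it suffices to check that $\q$ carries no such family: once this is known, there are no nontrivial totally geodesic hypersurfaces in $\q\times\R$, so every totally geodesic hypersurface of $\q\times\R$ (or, if it is disconnected, each of its connected components) must be one of the trivial ones, namely a horizontal hyperplane $\q\times\{t\}$ or a vertical cylinder $\Sigma_0\times\R$ over a totally geodesic hypersurface $\Sigma_0$ of $\q$.

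To exclude isoparametric families of totally geodesic hypersurfaces in $\q$, I would examine the parallel hypersurfaces of a single totally geodesic $\Sigma_0\subset\q$. In a space form of constant curvature $\epsilon$, the equidistant hypersurfaces $f_s$ of $\Sigma_0$ are totally umbilical, and their umbilical function $\lambda=\lambda(s)$ satisfies a Riccati equation of the form $\lambda'+\lambda^2+\epsilon=0$ (the precise sign being fixed by the orientation conventions and immaterial here). Since $\lambda(0)=0$, this forces $\lambda'(0)=-\epsilon\neq 0$, so $\lambda$ cannot vanish on any interval about $0$; explicitly one finds $\lambda(s)=-\tan s$ for $\epsilon=1$ and $\lambda(s)=\tanh s$ for $\epsilon=-1$, each of which is zero only at $s=0$. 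Hence $f_s$ is totally geodesic for no value $s\neq 0$, and there is no interval $I_0$ over which the whole family is totally geodesic---which is precisely the assertion that $\q$ admits no isoparametric family of totally geodesic hypersurfaces.

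The only substantive ingredient is this nonexistence result, quoted as classical just before the statement; if I preferred a convention-free justification I would argue geometrically instead, noting that in $\s^n$ two distinct totally geodesic hypersurfaces (great hyperspheres) always intersect and so cannot be equidistant, while in $\h^n$ the hypersurfaces equidistant from a totally geodesic hyperplane are umbilical but never geodesic away from the hyperplane itself. Granting this fact in either form, the corollary follows with no additional computation, so I would keep the write-up to the two moves above: invoke Theorem \ref{th-classifMxR} and apply the classical nonexistence of parallel totally geodesic families in $\q$.
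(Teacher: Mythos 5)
Your proposal is correct and follows essentially the same route as the paper, which derives the corollary immediately from Theorem \ref{th-classifMxR} together with the ``well known fact'' that $\q$ admits no isoparametric families of totally geodesic hypersurfaces. The only difference is that you also supply a proof of that quoted fact (via the Riccati equation $\lambda'+\lambda^2+\epsilon=0$ for parallel hypersurfaces, or the geometric intersection argument), which the paper simply takes as classical; both verifications are sound.
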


If we let $M$ be a warped product $I_0\times_\omega M_0^{n-1}$,  then
the  standard isometric immersions $f_s: M_0\hookrightarrow \{s\}\times_\omega M_0\subset M$, $s\in I_0,$
define a family  $\mathscr F=\{f_s\,;\, s\in I_0\}$ of  parallel
and totally umbilical hypersurfaces of $M$ whose constant umbilical function
is $\omega'/\omega$ (see Section \ref{sec-preliminaries}).
In particular, $\mathscr F$ is isoparametric. Hence,
from Theorem \ref{th-classifMxR}, there exist non trivial totally umbilical hypersurfaces in $M\times\R.$

Conversely, assume that $\mathscr F=\{f_s:M_0\rightarrow M'\subset M\,;\, s\in I_0\}$
is an isoparametric family of totally umbilical
hypersurfaces covering an open set $M'\subset M.$ In this case, denoting by $\lambda(s)$ the
umbilical constant of $f_s$\,, we have that $M'$ is isometric
to the warped product $I_0\times_\omega M_0$\,, where the warping function
$\omega$ satisfies $\lambda=\omega'/\omega$ (see, e.g., \cite[Section 4]{bolton}).

From these considerations and Theorem \ref{th-classifMxR}, we obtain the following result, which
was previously obtained in \cite{souam-veken}.

\begin{corollary}
There exists a nontrivial totally  umbilical
hypersurface $\Sigma$ in $M\times I$ if and only if
$M$ contains an open set which is isometric to  a warped product
$I_0\times_\omega M_0$ as above.  If so,
$\Sigma$ is totally geodesic if and only if the
corresponding warping function is constant.
\end{corollary}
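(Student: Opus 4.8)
The plan is to derive this corollary as a direct consequence of Theorem \ref{th-classifMxR} together with the two structural facts already assembled just above its statement. The theorem reduces the existence of a nontrivial totally umbilical hypersurface in $M\times I$ to the existence of an isoparametric family of totally umbilical hypersurfaces of $M$. So the entire task is to translate ``$M$ admits such a family (at least locally)'' into the geometric condition ``$M$ contains an open set isometric to a warped product $I_0\times_\omega M_0$,'' and then to match up the totally geodesic case with the case of constant warping function.

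For the forward direction, I would assume a nontrivial totally umbilical $\Sigma\subset M\times I$ exists. By Theorem \ref{th-classifMxR}, $M$ admits an isoparametric family $\mathscr F=\{f_s:M_0\to M'\subset M\}$ of totally umbilical hypersurfaces covering some open set $M'$. I then invoke the fact recorded in the paragraph preceding the corollary (citing \cite[Section 4]{bolton}): such a family realizes $M'$ as isometric to a warped product $I_0\times_\omega M_0$, with warping function $\omega$ satisfying $\omega'/\omega=\lambda(s)$, where $\lambda(s)$ is the umbilical constant of $f_s$. This gives exactly the open set required. For the converse, I would assume $M$ contains an open set isometric to $I_0\times_\omega M_0$. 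Then, as noted in the paragraph before the statement, the standard inclusions $f_s:M_0\hookrightarrow\{s\}\times_\omega M_0$ form a parallel, isoparametric family of totally umbilical hypersurfaces with umbilical constant $\omega'/\omega$; Theorem \ref{th-classifMxR} then produces a nontrivial totally umbilical hypersurface in $M\times I$, indeed one can construct it explicitly via Lemma \ref{lem-parallel} by choosing $\rho$ to solve \eqref{eq-difequation} with $\lambda=\omega'/\omega$.

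For the final clause, I would use the last assertion of Theorem \ref{th-classifMxR}, namely that a connected totally umbilical $\Sigma$ is totally geodesic if and only if its angle function is constant, combined with the equivalences of Lemma \ref{lem-localgraph}. Concretely, under the correspondence above, $\Sigma$ is totally geodesic precisely when each $f_s$ is totally geodesic (equivalence (i)$\Leftrightarrow$(iii) of Lemma \ref{lem-localgraph}), which happens exactly when the umbilical constant $\lambda(s)=\omega'/\omega$ vanishes identically, that is, when $\omega$ is constant. I expect the main subtlety to be a matter of bookkeeping rather than genuine difficulty: one must be careful that the isoparametric family obtained from Theorem \ref{th-classifMxR} is only \emph{local}, so the statement should be read as giving a \emph{local} isometry onto a warped-product open set, and one should confirm that the warping-function characterization of totally geodesic behavior is consistent across the covering. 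Since all the heavy analytic content is already packaged in Lemmas \ref{lem-parallel} and \ref{lem-localgraph} and in the cited warped-product structure theorem, the proof itself amounts to assembling these ingredients.
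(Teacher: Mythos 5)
Your proposal is correct and takes essentially the same route as the paper, which derives this corollary directly from Theorem \ref{th-classifMxR} combined with the two preceding paragraphs: the standard inclusions $f_s:M_0\hookrightarrow\{s\}\times_\omega M_0$ furnish an isoparametric totally umbilical family with umbilical constant $\omega'/\omega$, and conversely Bolton's structure theorem converts an isoparametric totally umbilical family covering an open set $M'\subset M$ into an isometry of $M'$ with $I_0\times_\omega M_0$. Your treatment of the final clause --- identifying the totally geodesic case via the equivalence (i)$\Leftrightarrow$(iii) of Lemma \ref{lem-localgraph} with $\lambda=\omega'/\omega\equiv 0$, i.e.\ $\omega$ constant --- is precisely the argument the paper intends.
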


Given a warped product $I\times_\omega M$,  consider the diffeomorphism
\begin{equation} \label{eq-F}
F:I\rightarrow J=F(I)
\end{equation}
such that $F'=1/\omega.$ As can be easily checked, the map
\begin{equation} \label{eq-varphi}
\begin{array}{cccc}
\varphi\colon & I\times_\omega M   & \rightarrow & M \times J\\[1ex]
              & (t,p) & \mapsto     & (p,F(t))
\end{array}
\end{equation}
is a conformal diffeomorphism whose conformal factor is $F'=1/\omega.$

It is a well known fact that the  property of being
totally umbilical is preserved under a conformal diffeomorphism
between the ambient spaces. Therefore,
a hypersurface $\Sigma\subset I\times_\omega M$ is totally umbilical if and only if
$\varphi(\Sigma)$ is totally umbilical in $M\times J.$ This fact, together with Theorem \ref{th-classifMxR},
gives the following characterization of umbilical hypersurfaces of $I\times_\omega M.$

\begin{theorem} \label{th-classifMxRwarped}
There exist nontrivial totally  umbilical
hypersurfaces in $I\times_\omega M$ if and only if
$M$ admits  isoparametric families of  totally umbilical hypersurfaces.
\end{theorem}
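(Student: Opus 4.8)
The plan is to reduce Theorem \ref{th-classifMxRwarped} entirely to Theorem \ref{th-classifMxR} by exploiting the conformal diffeomorphism $\varphi$ defined in \eqref{eq-varphi}. Since the statement concerns the mere \emph{existence} of nontrivial totally umbilical hypersurfaces, and since both the hypothesis (that $M$ admits isoparametric families of totally umbilical hypersurfaces) and the ambient factor $M$ are unchanged between the two theorems, the essential observation is that $\varphi$ identifies the two ambient product spaces in a way that preserves umbilicity.

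First I would invoke the fact, already recorded in the excerpt immediately before the statement, that $\varphi\colon I\times_\omega M\to M\times J$ is a conformal diffeomorphism with conformal factor $1/\omega$, and that total umbilicity of a hypersurface is preserved under conformal diffeomorphisms between ambient spaces. This gives the key correspondence: a hypersurface $\Sigma\subset I\times_\omega M$ is totally umbilical if and only if $\varphi(\Sigma)$ is totally umbilical in $M\times J$. Moreover, $\varphi$ carries the trivial umbilical hypersurfaces of $I\times_\omega M$ (the vertical hyperplanes $\{t\}\times_\omega M$ and the products $I\times_\omega\Sigma_0$) to the trivial umbilical hypersurfaces of $M\times J$ (the horizontal hyperplanes $M\times\{t\}$ and the cylinders $\Sigma_0\times J$), so that nontriviality is likewise preserved under $\varphi$.

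With this correspondence in hand, I would argue the two directions simultaneously. There exists a nontrivial totally umbilical hypersurface $\Sigma$ in $I\times_\omega M$ if and only if there exists a nontrivial totally umbilical hypersurface $\varphi(\Sigma)$ in $M\times J$; by Theorem \ref{th-classifMxR} applied to the product $M\times J$ (with the open interval $J=F(I)$ playing the role of $I$), this in turn holds if and only if $M$ admits isoparametric families of totally umbilical hypersurfaces. Since the latter condition refers only to $M$ and is therefore insensitive to whether we work over $I$ or over $J$, the equivalence transfers back verbatim to $I\times_\omega M$, which is exactly the assertion.

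I do not expect a genuine obstacle here, as the content has been front-loaded into the preparatory discussion of $\varphi$; the only point requiring a moment of care is the bookkeeping that $\varphi$ sends trivial hypersurfaces to trivial ones and nontrivial to nontrivial, so that the word \emph{nontrivial} in Theorem \ref{th-classifMxR} can be legitimately carried across the conformal identification. If pressed, the hardest part would be verifying this matching of trivial families explicitly, but it follows directly from the definitions of $\varphi$ and of the trivial umbilical hypersurfaces given in Section \ref{sec-preliminaries}.
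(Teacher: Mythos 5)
Your proposal is correct and takes essentially the same route as the paper: the paper, too, proves Theorem \ref{th-classifMxRwarped} by transporting the problem through the conformal diffeomorphism $\varphi$ of \eqref{eq-varphi}, using the conformal invariance of total umbilicity, and applying Theorem \ref{th-classifMxR} to $M\times J$. Your explicit verification that $\varphi$ matches trivial umbilical hypersurfaces (vertical hyperplanes $\{t\}\times_\omega M$ with horizontal hyperplanes $M\times\{F(t)\}$, and $I\times_\omega\Sigma_0$ with $\Sigma_0\times J$) is left implicit in the paper, and is a correct, welcome piece of bookkeeping.
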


In the above theorem, as in the Riemannian product case, the condition on the existence
of isoparametric families of totally umbilical hypersurfaces
can be replaced by that of $M$ having
a local structure of a warped product.

\section{Totally Umbilical Hypersurfaces of  $\s^n\times\R$ and  $\h^n\times\R$} \label{sec-umbilicalQnxR}

Our aim in this section is to extend
the main results by Souam and Toubiana \cite{souam-toubiana} on totally umbilical hypersurfaces
of $\s^2\times\R$ and  $\h^2\times\R$ to the spaces  $\s^n\times\R$ and  $\h^n\times\R$.
Actually, our approach includes the case
$n=2$ and  provides new proofs for Souam and Toubiana's theorems.

\subsection{Totally Umbilical Hypersurfaces of $\s^n\times\R$} \label{sec-umbilicalSnxR}
In the next two theorems, we show that there exist unique (up to ambient isometries)
properly embedded nontrivial totally umbilical hypersurfaces of $\s^n\times\R$ which are all rotational.

\begin{theorem} \label{th-umbilicalspherical}
There exists a one-parameter family $\{\Sigma(c)\,;\, c>0\}$ of
totally umbilical hypersurfaces of \,$\s^n\times\R$
with the following properties:
\begin{itemize}[parsep=1ex]
\item[\rm i)] For any $c>0,$ $\Sigma(c)$ is complete, analytic, properly embedded and rotational.
\item[\rm ii)] For $c=1,$ $\Sigma(c)$ is an unbounded graph over an open hemisphere of \,$\s^n$. In particular,
$\Sigma(c)$ is homeomorphic to $\R^n.$
\item[\rm iii)] For $c\ne 1,$ $\Sigma(c)$ is homeomorphic to $\s^n,$ and is invariant under reflection
with respect to a horizontal hyperplane $\s^n\times\{t_0\}.$
\end{itemize}
\end{theorem}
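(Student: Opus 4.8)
The plan is to construct the family $\{\Sigma(c)\}$ explicitly as $(f_s,\phi)$-graphs built on the standard isoparametric family of geodesic spheres in $\s^n$, and then to read off the geometric properties (i)--(iii) directly from the behavior of the associated $\rho$-function. First I would fix a point $o\in\s^n$ (the axis $\{o\}\times\R$), and take $M_0=\s^{n-1}$ with $f_s:\s^{n-1}\to\s^n$ the geodesic sphere of radius $s\in(0,\pi)$ centered at $o$. This is a classical isoparametric family of totally umbilical hypersurfaces whose umbilical constant is $\lambda(s)=\cot s$ (the principal curvature of a geodesic $s$-sphere in $\s^n$). Invoking Lemma \ref{lem-parallel}, any $(f_s,\phi)$-graph determined by a solution $\rho$ of $y'+(\cot s)\,y=0$ with $0<\rho<1$ is totally umbilical in $\s^n\times\R$; and by the explicit formula \eqref{eq-solutiondifequation} the solution is
\[
\rho(s)=\frac{c}{\sin s},\qquad c>0.
\]
This single parameter $c$ is exactly the one indexing the family, so I would \emph{define} $\Sigma(c)$ to be this graph (with $\phi$ recovered from $\rho$ via \eqref{eq-phi10}), rotational by construction since the profile depends only on the geodesic distance $s$ from $o$.

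Next I would analyze the admissibility condition $0<\rho<1$, i.e.\ $c<\sin s$, which pins down the maximal interval $I_0$ of definition and therefore the global shape. The hard part will be this global analysis: since $\rho=c/\sin s$, the constraint $\rho<1$ forces $\sin s>c$, so for $c<1$ the variable $s$ ranges over a symmetric subinterval of $(0,\pi)$ bounded away from the poles, while for $c=1$ one boundary value is the equator $s=\pi/2$ where $\rho=1$, and for $c>1$ no solution exists in the naive range—so the correct normalization and the meaning of the parameter near the critical value $c=1$ must be handled carefully. At the endpoints where $\rho\to 1$, the integrand $\rho/\sqrt{1-\rho^2}$ in \eqref{eq-phi10} is singular, and here Lemma \ref{lem-convergenceintegral} is exactly the tool needed: checking that $\rho'(b)>0$ (resp.\ $\rho'(a)<0$) at such an endpoint guarantees $\phi$ extends continuously, so the height function remains bounded and the graph closes up smoothly. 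I expect this endpoint/convergence analysis, together with verifying that the tangent plane becomes vertical (so two graph pieces glue $C^\infty$-ly across $\Theta=0$), to be the main obstacle.

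For (ii), when $c=1$ the interval $I_0$ is one-sided—$s$ runs over $(\pi/2,\pi)$ (or $(0,\pi/2)$) with $\rho\to 1$ only at the equator—so $\Sigma(1)$ is a single graph over an open hemisphere; since $\phi$ diverges as $s$ approaches the pole (where $\rho\to\infty$ is cut off, but $s$ itself reaches $\pi$ and the spheres degenerate), the height is unbounded and the resulting hypersurface is homeomorphic to $\R^n$. For (iii), when $c\ne1$ (specifically $c<1$) the constraint $\sin s>c$ gives a symmetric interval $s\in(s_-,s_+)$ with $\rho=1$ at both ends; gluing the two graph halves using the convergence from Lemma \ref{lem-convergenceintegral} produces a compact hypersurface symmetric under the reflection $t\mapsto 2t_0-t$ across the horizontal slice at the midpoint, hence homeomorphic to $\s^n$. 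Analyticity follows because $\rho=c/\sin s$ is real-analytic in $s$ and the graph map is built from analytic data; proper embeddedness follows from rotational symmetry together with the monotonicity of $\phi$ on each half, so distinct $(s,p)$ map to distinct points and the image is closed. Completeness is immediate once the endpoint gluing is established, since the induced metric has no missing boundary. I would organize the writeup by first recording the formula for $\rho$ and $\phi$, then treating the three cases $c<1$, $c=1$ in parallel, deferring the delicate smooth-closure argument at $\Theta=0$ to a short separate verification.
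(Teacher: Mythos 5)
Your overall strategy --- build $\Sigma(c)$ as $(f_s,\phi)$-graphs on the family of concentric geodesic spheres, apply Lemma \ref{lem-parallel} to get umbilicity, use Lemma \ref{lem-convergenceintegral} at endpoints where $\rho\to 1$, and glue reflected copies across the slice where the tangent planes become vertical --- is exactly the paper's strategy. But there is a genuine error at the first step that propagates through everything: the sign of $\lambda$. In Lemma \ref{lem-parallel} and in \eqref{eq-principalcurvatures}, $\lambda(s)$ is the umbilical constant of $f_s$ with respect to the normal $\eta_s=\gamma_p'(s)$ of the parallel family; for geodesic spheres of radius $s$ centered at $o$, this is the \emph{outward} normal, so $\lambda(s)=-\cot s$, not $\cot s$. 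Equation \eqref{eq-difequation} then reads $y'=\cot(s)\,y$, whose solutions are $\rho(s)=C\sin s$, not your $\rho(s)=c/\sin s$ (which solves $y'=-\cot(s)\,y$). You can check directly that your choice fails: by \eqref{eq-principalcurvatures}, with $\rho=c/\sin s$ one gets $k_i=-\rho k_i^s=\rho\cot s$ for $i\le n-1$, while $k_n=\rho'=-\rho\cot s$, so the graph is not umbilical.

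The global analysis then collapses. For $c=1$, the admissibility set $\{s:\ 0<c/\sin s<1\}$ is empty ($1/\sin s\ge 1$ everywhere, with equality only at $s=\pi/2$), so your $c=1$ case produces no graph at all --- your claim that $s$ runs over $(0,\pi/2)$ or $(\pi/2,\pi)$ with $\rho<1$ is false, and the remark about ``$\rho\to\infty$ being cut off'' cannot be made coherent, since $\phi'=\rho/\sqrt{1-\rho^2}$ requires $\rho<1$. For $c<1$ your band $\sin s>c$ around the equator has $\rho=1$ at \emph{both} ends and $\rho\ge c>0$ throughout; since $\rho$ never vanishes, the level spheres never collapse to a point, and successive reflections yield a vertically periodic hypersurface over an annular band --- never a compact hypersurface homeomorphic to $\s^n$ (and, by Theorem \ref{th-uniqspherical}, no such umbilical hypersurface exists anyway). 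The correct family, $\rho_c$ proportional to $\sin s$ as in \eqref{eq-rhoc002} with $\rho_c<1$ on $[0,a)$ and $\rho_c\to 1$ at $a$ (where $a=\pi/2$ if $c=1$ and $a=\arcsin(\min\{c,1/c\})$ otherwise), has the crucial feature your proposal misses: $\rho_c(0)=0$, so the graph caps off analytically at the axis point $o$, a minimum of the height function, and a \emph{single} reflection across $\s^n\times\{t_0\}$ closes it into a sphere when $c\ne 1$; for $c=1$ one computes $\phi(s)=\log(1/\cos s)\to+\infty$ as $s\to\pi/2$, giving the unbounded graph over an open hemisphere. So your endpoint/gluing machinery is the right tool, but it is applied to the wrong $\rho$, and with the sign corrected the case structure you describe (symmetric band for $c<1$, nothing for $c>1$) must be replaced by the one above.
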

\begin{proof}
The hypersurfaces $\Sigma(c)$ will be obtained by properly ``gluing''
$(f_s,\phi)$-graphs, except for case (ii), in which $\Sigma(c)$ is an $(f_s,\phi)$-graph itself.
To accomplish that,
let us fix $o\in\s^n$ and, for each $s\in (0,\pi/2),$ let
$f_s:\s^{n-1}\rightarrow\s^n$ be the geodesic sphere of $\s^n$ with center
at $o$ and radius $s.$ Then, $f_s$ is umbilical with principal curvature
\[
\lambda(s)=-\cot s
\]
with respect to the outward orientation.

Given $c>0,$ define the interval $I_c=[0,a),$ where $a\in (0,\pi/2]$ and
\[
a=\left\{
\begin{array}{lcc}
\pi/2 & \text{if} & c=1\\[.8ex]
\arcsin(1/c) & \text{if} & c>1 \\ [.8ex]
\arcsin(c) & \text{if} & c<1
\end{array}
\right..
\]
In this setting, the function $\rho_c:[0,a)\rightarrow\R$ given by
\begin{equation} \label{eq-rhoc002}
\rho_c(s)=\left\{
\begin{array}{lcc}
\sin(s) & \text{if} & c=1\\[.8ex]
c\sin(s) & \text{if} & c>1 \\ [.8ex]
\frac{1}{c}\sin(s) & \text{if} & c<1
\end{array}
\right.
\end{equation}
is clearly a solution of $y'=\cot(s)y$ which satisfies
\begin{equation}  \label{eq-rhoc01}
0<\rho_c|_{(0,a)}<1, \quad \rho_c(0)=0, \quad\text{and}\quad \lim_{s\rightarrow a}\rho_c(s)=1.
\end{equation}

Thus, by Lemma \ref{lem-parallel},  $\rho_c$ determines a totally umbilical
$(f_s,\phi)$-graph $\Sigma$ which is rotational, for the hypersurfaces $f_s$ are
concentric spheres.  Moreover, since $\rho$ is real analytic, by \eqref{eq-phi10}, so
is $\phi,$ which implies that the graph $\Sigma$ itself is analytic. In addition,
$\phi(0)=\phi'(0)=0$. So $o\in\Sigma$ is a minimum of the height function of $\Sigma$.

For $c=1$, we have $\rho_c(s)=\sin(s).$ In this case, it follows from  \eqref{eq-phi10} that
\[
\phi(s)=\log\left(\frac{1}{\cos (s)}\right), \,\,\, s\in I_c=[0,\pi/2).
\]
In particular, $\phi(s)\rightarrow +\infty$ as $s\rightarrow\pi/2,$
so that $\Sigma(1):=\Sigma$ is an unbounded
graph over the hemisphere of $\s^n$ centered at $o$ (Fig. \ref{fig-unbounded}).
This covers  (ii).

\begin{figure}[htbp]
\includegraphics{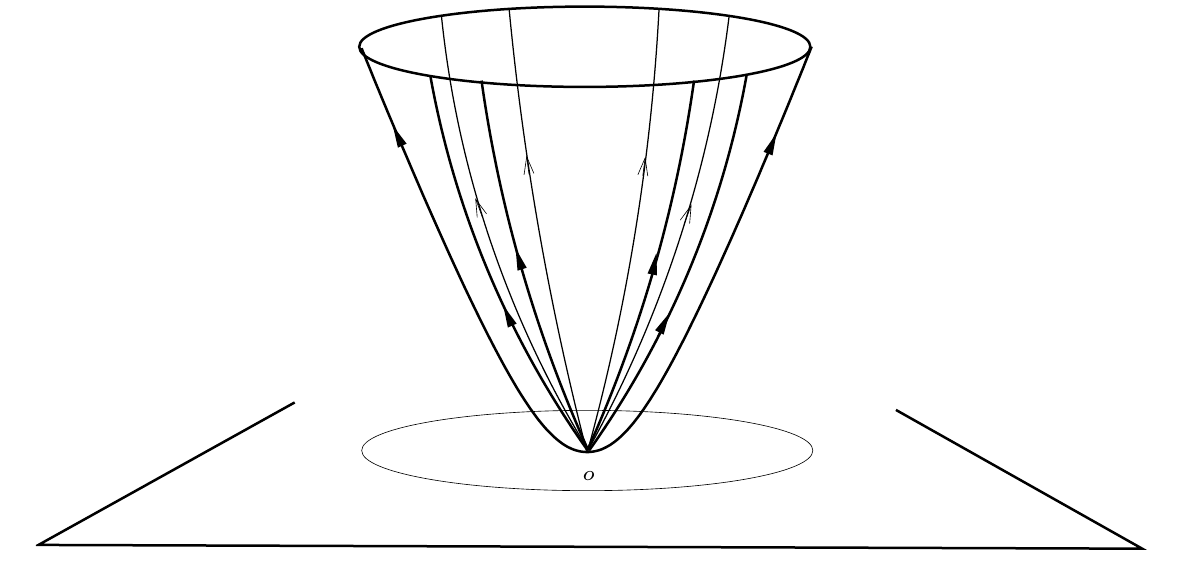}
\caption{\small An unbounded totally umbilical graph  in $\s^n\times\R.$}
\label{fig-unbounded}
\end{figure}


For $0<c\ne 1,$ we have  that
\[
\lim_{s\rightarrow a}\rho_c'(s)=c\cos(a)>0.
\]
So, by Lemma \ref{lem-convergenceintegral}, $\phi$ is bounded, that is, the limit
\[
t_0:=\lim_{s\rightarrow a}\phi_c(s)=\int_{0}^{a}\frac{\rho_c(s)}{\sqrt{1-(\rho_c(s))^2}}ds
\]
is well defined. Moreover, from \eqref{eq-phi10} and the last equality in \eqref{eq-rhoc01},  one has
\[
\lim_{s\rightarrow a}\phi'(s)=+\infty,
\]
which implies that the tangent spaces of the $(f_s,\phi)$-graph $\Sigma$ along its $T$-trajectories
converge to  vertical tangent spaces along the sphere $S_a(o)\times\{t_0\}$ (Fig. \ref{fig-sphere}).

Thus, denoting
by $\Sigma'$ the reflection of $\Sigma$ with respect to $\s^n\times\{t_0\},$ we have that
\[
\Sigma(c)={\rm closure}(\Sigma)\cup{\rm closure}(\Sigma')
\]
is a totally umbilical hypersurface of $\s^n\times\R$, which is analytic, properly embedded, homeomorphic to
$\s^n$, and symmetric with respect to $\s^n\times\{t_0\}.$ This shows (iii) and concludes our proof. 
\end{proof}

\begin{figure}[htbp]
\includegraphics{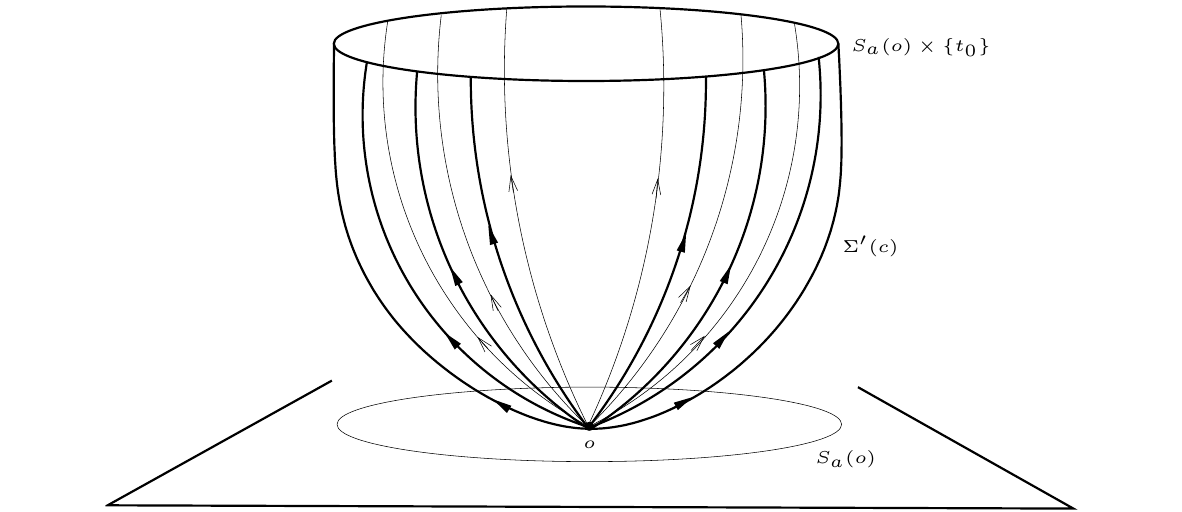}
\caption{\small All $T$-trajectories of   $\Sigma'(c)$ emanate from $o$ and meet its boundary
$S_a(o)\times\{t_0\}$ orthogonally.}
\label{fig-sphere}
\end{figure}

\begin{theorem} \label{th-uniqspherical}
Let $\Sigma$ be a connected totally umbilical and non totally geodesic hypersurface
of \,$\s^n\times\R.$ Then, $\Sigma$ is contained in one of the hypersurfaces $\Sigma(c)$
given in Theorem $\ref{th-umbilicalspherical}.$ 
\end{theorem}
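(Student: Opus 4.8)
The plan is to show that, off its singular set, $\Sigma$ necessarily coincides with one of the model hypersurfaces $\Sigma(c)$, and then to propagate this coincidence across the singular points using the completeness and rigidity of $\Sigma(c)$.

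First, since $\Sigma$ is connected, totally umbilical, and not totally geodesic, Theorem \ref{th-classifMxR} guarantees that its angle function $\Theta$ is non-constant; hence the open set $\Sigma^*$ of points of $\Sigma$ which are neither horizontal ($T\neq 0$) nor vertical ($\Theta\neq 0$) is nonempty. Fixing $p_0\in\Sigma^*$, Lemma \ref{lem-localgraph} represents $\Sigma$ near $p_0$ as an $(f_s,\phi)$-graph whose level hypersurfaces $f_s$ form an isoparametric family of totally umbilical hypersurfaces of $\s^n$. The next step is to invoke the classification of totally umbilical hypersurfaces of $\s^n$: each $f_s$ is an open piece of a geodesic sphere and, being parallel, the whole family is concentric with a common center $o\in\s^n$. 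Thus, near $p_0$, $\Sigma$ is rotational about the axis $\{o\}\times\R$, with the same umbilical constants $\lambda(s)=-\cot s$ ($s$ the geodesic radius) as in Theorem \ref{th-umbilicalspherical}.

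Next I would identify the constant $c$. By Lemma \ref{lem-parallel} the $\rho$-function must solve $\rho'+\lambda(s)\rho=0$, i.e. $\rho'=\cot(s)\,\rho$, whose general solution is $\rho(s)=c\sin s$; the constraint $0<\rho<1$ then forces $s$ to range in an interval $[0,a)$ with $c\sin a=1$, matching the intervals $I_c$. Since $\phi$ is determined by $\rho$ up to an additive constant through \eqref{eq-phi10}, the local graph coincides, after a vertical translation, with a piece of $\Sigma(c)$ for exactly this $c$. Regarding $c=\rho(s)/\sin s$ and the center $o$ as functions on $\Sigma^*$, I would then argue, using continuity and the local $(f_s,\phi)$-graph representation, that both are locally constant, hence constant on each component of $\Sigma^*$; this pins down a single triple (axis $o$, constant $c$, height normalization) on a neighborhood of $p_0$, realizing $\Sigma$ there as an open subset of a fixed translate of $\Sigma(c)$.

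The final, and hardest, step is the global propagation across the singular set. The only singular points are horizontal points, where the concentric spheres collapse to an axis point $(o,t)$ and the height function attains an extremum, and vertical points, where $\rho\to 1$ and the tangent spaces become vertical along $S_a(o)\times\{t_0\}$ (the latter occurring only when $c\neq 1$). At a horizontal point the condition $\rho(0)=0$ together with uniqueness of the solution $\rho=c\sin s$ forces $\Sigma$ to cap off smoothly at the pole exactly as $\Sigma(c)$ does, while at a vertical point the reflection across $\s^n\times\{t_0\}$ built into $\Sigma(c)$ is the unique smooth continuation of the graph. The main obstacle is to make these two extension statements rigorous, which is most cleanly achieved by establishing that $\Sigma$ is real-analytic (totally umbilical hypersurfaces of the analytic ambient $\s^n\times\R$ are analytic); once this is known, the identity principle for connected analytic hypersurfaces upgrades the open coincidence with $\Sigma(c)$ to the inclusion $\Sigma\subseteq\Sigma(c)$, completing the proof.
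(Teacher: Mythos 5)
Your local analysis coincides with the paper's: Lemma \ref{lem-localgraph} gives the $(f_s,\phi)$-graph structure where $\Theta T\neq 0$, the only isoparametric totally umbilical families in $\s^n$ are concentric geodesic spheres, and the ODE $\rho'=\cot(s)\,\rho$ forces $\rho=c\sin s$, so each graph piece lies in some vertical translate of a model $\Sigma(c)$. Your additional care about the local constancy of $c$ and of the center $o$ on the regular set is sound, and in fact more explicit than the paper, which compresses this into ``$\Sigma'\subset\Sigma(c)$'' plus connectedness.

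The genuine gap is in your final step: the entire propagation across horizontal and vertical points rests on the parenthetical claim that totally umbilical hypersurfaces of the analytic ambient $\s^n\times\R$ are automatically real-analytic. That is not an available a priori regularity theorem: umbilicity is an overdetermined but non-elliptic condition, and in this paper (as in Souam--Toubiana) analyticity is a \emph{conclusion} of the classification --- it is verified for the models $\Sigma(c)$ because their $\rho$- and $\phi$-functions are explicitly analytic --- not an input one may assume about an arbitrary smooth umbilical $\Sigma$. Invoking it is therefore essentially circular, and it carries the whole weight of the hardest step. Note also that the gap is avoidable: the paper's proof never crosses the singular set by regularity. It observes that, $\Sigma$ being connected, umbilical and non totally geodesic, neither $T$ nor $\Theta$ can vanish on an open set (an open horizontal piece lies in a slice, an open vertical piece in a cylinder, both totally geodesic), so the set where $\Theta T\neq 0$ is open and dense; each component there is an $(f_s,\phi)$-graph contained in some $\Sigma(c)$, and since the models are properly embedded, hence closed, connectedness of $\Sigma$ together with continuity pushes the inclusion $\Sigma\subset\Sigma(c)$ across the nowhere dense degenerate set --- no analytic continuation needed. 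If you prefer your scheme, you could instead patch the crossing points directly via your own observation that the (continuous) umbilical function determines $c$ at a pole or at a vertical tangency, so adjacent graph pieces must select the same model; but as written, the unproven analyticity assertion leaves the proof incomplete.
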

\begin{proof}
Since $\Sigma$ is connected and non totally geodesic, neither $T$ nor $\theta$ can vanish on
an open set of $\Sigma.$ Hence, we can assume that $\theta T$ never vanishes on
$\Sigma.$ In this case, it follows from Lemma \ref{lem-localgraph} that $\Sigma$ is the union
of  $(f_s,\phi)$-graphs $\Sigma'$ whose parallel family $f_s$ is isoparametric and totally umbilical.
However, the only parallel family in $\s^n$ with these properties is that of concentric geodesic spheres, which
gives that $\Sigma'$ is rotational, and that the $\rho$-function of $\Sigma'$ is one the functions $\rho_c$ defined in
\eqref{eq-rhoc002}. Therefore, $\Sigma'\subset\Sigma(c).$ This fact and the connectedness of $\Sigma$
clearly imply  that $\Sigma\subset\Sigma(c).$
\end{proof}

\subsection{Totally Umbilical Hypersurfaces of $\h^n\times\R$}
The class of complete nontrivial totally umbilical hypersurfaces of $\h^n\times\R$ is richer than that
of \,$\s^n\times\R$. Indeed, as we shall see, besides the rotational ones, this class also includes
hypersurfaces which are invariant by  certain translational isometries of $\h^n\times\R.$

The reason for that abundance becomes clear when we consider, together with  Theorem \ref{th-classifMxR},
the 
fact that we have three types of  foliations  of $\h^n$ by isoparametric totally umbilical hypersurfaces.
Namely, by geodesic spheres, by horospheres, and by  equidistant hypersurfaces (from
a fixed totally geodesic hyperplane of $\h^n$).

In the next theorems, we show that  each of these foliations of $\h^n$
gives rise to  complete  nontrivial totally umbilical hypersurfaces of $\h^n\times\R$,
which are unique, and invariant by one of the aforementioned isometries of this space.

\begin{theorem}\label{th-umbilicalhyperbolic}
There exists a one-parameter family $\{\Sigma(c)\,;\, c>0\}$ of
rotational totally umbilical hypersurfaces of \,$\h^n\times\R$ which are complete, analytic and properly embedded.
Furthermore, for any $c>0,$ $\Sigma(c)$ is homeomorphic to $\s^n,$ and is invariant under reflection
with respect to a hyperplane $\h^n\times\{t_0\}.$
\end{theorem}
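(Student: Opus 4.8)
The plan is to run the construction of Theorem~\ref{th-umbilicalspherical} with $\s^n$ replaced by $\h^n$, where the relevant isoparametric family is again that of concentric geodesic spheres. First I would fix $o\in\h^n$ and, for each $s>0$, take $f_s\colon\s^{n-1}\to\h^n$ to be the geodesic sphere of radius $s$ centered at $o$, outwardly oriented; these are totally umbilical with principal curvature $\lambda(s)=-\coth s$ and form an isoparametric family, so Lemma~\ref{lem-parallel} applies. Equation~\eqref{eq-difequation} becomes $y'-\coth(s)\,y=0$, whose positive solutions are $\rho_c(s)=c\sinh s$ for $c>0$. Setting $a=\operatorname{arcsinh}(1/c)$ and $I_c=[0,a)$, the function $\rho_c$ satisfies the hyperbolic analogue of \eqref{eq-rhoc01}, namely $\rho_c(0)=0$, $0<\rho_c|_{(0,a)}<1$, and $\lim_{s\to a}\rho_c(s)=1$. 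By Lemma~\ref{lem-parallel} it therefore determines a totally umbilical $(f_s,\phi)$-graph $\Sigma$, which is rotational since the $f_s$ are concentric spheres, and which is analytic because $\rho_c$ is analytic and $\phi$ is recovered from it through \eqref{eq-phi10}. Normalizing $\phi(0)=0$ forces $\phi'(0)=0$ (as $\rho_c(0)=0$), and the oddness of $\rho_c$ makes $\phi$ even in $s$, so $\Sigma$ closes up analytically at the apex $(o,0)$, a minimum of the height function.

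The essential point where the hyperbolic case simplifies is the behaviour at $s=a$. Here $\rho_c'(a)=c\cosh a=\sqrt{1+c^2}>0$ for \emph{every} $c>0$, so---unlike the spherical case, where $c=1$ produces an unbounded graph---Lemma~\ref{lem-convergenceintegral}(i) always applies and
\[
t_0:=\lim_{s\to a}\phi(s)=\int_0^a\frac{\rho_c(s)}{\sqrt{1-\rho_c^2(s)}}\,ds
\]
is finite. Since $\lim_{s\to a}\rho_c(s)=1$, formula \eqref{eq-phi10} gives $\lim_{s\to a}\phi'(s)=+\infty$, so the tangent spaces of $\Sigma$ turn vertical along the sphere $S_a(o)\times\{t_0\}$. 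I would then reflect $\Sigma$ across the hyperplane $\h^n\times\{t_0\}$ to get $\Sigma'$ and define $\Sigma(c)=\operatorname{closure}(\Sigma)\cup\operatorname{closure}(\Sigma')$. Because this positivity of $\rho_c'(a)$ holds for all $c$, every $\Sigma(c)$ is a topological sphere, which explains the cleaner statement relative to Theorem~\ref{th-umbilicalspherical}; moreover $\Sigma(c)$ has image contained in the compact set $\overline{B_a(o)}\times[0,2t_0]$, so once it is known to be an embedded hypersurface it is automatically compact, hence complete and properly embedded.

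The step I expect to be the main obstacle is justifying that the glued set $\Sigma(c)$ is a genuine analytic embedded hypersurface across the equator $S_a(o)\times\{t_0\}$, precisely where $\Sigma$ ceases to be a graph over $\h^n$ and its $T$-trajectories meet $\h^n\times\{t_0\}$ orthogonally. The clean way to handle this is to work with the rotational profile curve $s\mapsto(s,\phi(s))$ in the half-plane (radial distance $\times$ height): the relation $\phi'(s)\to+\infty$ says exactly that this curve reaches $(a,t_0)$ with a horizontal tangent when height is used as the parameter, so reflecting it across $t=t_0$ yields an analytic continuation, and the surface swept by rotation inherits analyticity and embeddedness. Umbilicity of $\Sigma(c)$ then follows by analytic continuation, since $\Sigma$ and $\Sigma'$ are both totally umbilical and the condition $A=\lambda\,\mathrm{Id}$ is analytic and already holds on the dense open set $\Sigma\cup\Sigma'$. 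Finally, homeomorphy to $\s^n$ and invariance under reflection in $\h^n\times\{t_0\}$ are built into the construction.
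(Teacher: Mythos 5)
Your proposal is correct and follows essentially the same route as the paper: the isoparametric family of geodesic spheres with $\lambda(s)=-\coth s$, the solution $\rho_c(s)=c\sinh s$ on $[0,\sinh^{-1}(1/c))$, Lemma~\ref{lem-parallel} for umbilicity, Lemma~\ref{lem-convergenceintegral} (via $\rho_c'(a)=c\cosh a>0$) for boundedness of $\phi$, and reflection across $\h^n\times\{t_0\}$ to close up the sphere. Your added justification of analytic gluing at the apex and equator via the profile curve only makes explicit what the paper defers to the argument of Theorem~\ref{th-umbilicalspherical}, and your observation that $\rho_c'(a)=\sqrt{1+c^2}>0$ for every $c$ correctly explains why no unbounded case arises here.
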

\begin{proof}
Let us fix a point $o\in\h^n$ and, for each $s\in (0,+\infty),$ let
$f_s:\s^{n-1}\rightarrow\h^n$ be the geodesic sphere of $\h^n$ with center
at $o$ and radius $s.$ Then, $f_s$ is umbilical and
\[
\lambda(s)=-\coth s
\]
is its umbilical function with respect to the outward orientation.

Given $c>0,$ define the interval $I_c=[0,a),$ where $a=\sinh^{-1}(1/c).$
In this setting, it is clearly seen that the function 
\[
\rho_c(s)=c\sinh(s), \,\,\, s\in I_c=[0, a),
\]
is  a solution of $y'=\coth(s)y$ which satisfies
\begin{equation}  \label{eq-rhoc}
0<\rho_c|_{(0,a)}<1, \quad \rho_c(0)=0, \quad\text{and}\quad \lim_{s\rightarrow a}\rho_c(s)=1.
\end{equation}
So, by Lemma \ref{lem-parallel},  $\rho_c$ determines
a totally umbilical $(f_s,\phi)$-graph $\Sigma$ in $\h^n\times\R.$
As in the proof of Theorem \ref{th-umbilicalspherical}, $\Sigma$ is rotational and analytic. In addition,
\[
\lim_{s\rightarrow a}\rho_c'(s)=c\cosh(a)>0,
\]
which, together with Lemma \ref{lem-convergenceintegral}, gives that $\phi$ is bounded.

Henceforth, arguing just as in the
proof of Theorem \ref{th-umbilicalspherical}, one easily obtains
the desired hypersurface $\Sigma(c)$  by reflecting $\Sigma$ with respect to the horizontal hyperplane
$P_{t_0}$\,, where $t_0$ is the limit of $\phi(s)$ as $s\rightarrow a.$
\end{proof}

\begin{theorem}\label{th-horospheretype}
There exists a complete
totally umbilical hypersurface $\Sigma$ of \,$\h^n\times\R$ which is analytic, properly embedded and  homeomorphic to $\R^n.$
Furthermore,  $\Sigma$ is foliated by horizontal horospheres,
is symmetric with respect to $\h^n\times\{0\}$,
is contained in the slab \,$\h^n\times(-\frac{\pi}{2},\frac{\pi}{2})$,
and is asymptotic to the hyperplanes  $P_{-\pi/2}$ and $P_{\pi/2}$. 
\end{theorem}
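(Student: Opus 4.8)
**

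The plan is to follow exactly the template established in Theorems \ref{th-umbilicalspherical} and \ref{th-umbilicalhyperbolic}, now taking the isoparametric family to be the foliation of $\h^n$ by horospheres rather than geodesic spheres. First I would fix a point $\xi$ at infinity of $\h^n$ and let $\{f_s:\R^{n-1}\to\h^n\,;\, s\in\R\}$ be the family of horospheres centered at $\xi$, parametrized by signed distance $s$ along the geodesics asymptotic to $\xi$. A standard computation in $\h^n$ (using, say, the upper half-space model where horospheres are horizontal Euclidean hyperplanes) shows that each horosphere is totally umbilical with \emph{constant} umbilical function $\lambda(s)=-1$ with respect to a suitable orientation, so the family is isoparametric in the sense of our Definition. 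Thus Theorem \ref{th-classifMxR} guarantees nontrivial umbilical hypersurfaces of $\h^n\times\R$ arise from this family, and the construction proceeds via the $\rho$-function machinery.

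Next I would solve the defining ODE \eqref{eq-difequation}, namely $y'+\lambda(s)y=0$ with $\lambda\equiv -1$, whose solutions by \eqref{eq-solutiondifequation} are $\rho(s)=ce^{s}$; equivalently $\rho(s)=e^{s}$ after absorbing the constant into the choice of origin along the geodesic. To obtain a complete hypersurface symmetric about $\h^n\times\{0\}$, I would instead take the even solution
\begin{equation}\label{eq-rhohoro}
\rho(s)=\tanh(s)?\quad\text{— more precisely } \rho(s)=\frac{e^{s}}{\cdots},
\end{equation}
so let me state the intended choice cleanly: on $[0,\infty)$ set $\rho(s)=1-e^{-2s}$ normalized so that $0<\rho<1$ with $\rho(0)=0$ and $\rho(s)\to 1$ as $s\to\infty$; the precise closed form is dictated by requiring $\rho'+ \lambda\rho$ to vanish together with the boundary behavior, and I would verify $0<\rho|_{(0,\infty)}<1$ directly. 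Lemma \ref{lem-parallel} then yields that the $(f_s,\phi)$-graph $\Sigma$ determined by $\rho$ is totally umbilical in $\h^n\times\R$, and analyticity of $\phi$ follows from \eqref{eq-phi10} as in the previous theorems since $\rho$ is analytic.

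The crux of the argument, and the feature distinguishing this case from the spherical and geodesic-sphere cases, is the control of the height function $\phi(s)=\int_0^s \rho/\sqrt{1-\rho^2}\,du$ as $s\to\infty$. Here the parameter interval is the \emph{unbounded} $[0,\infty)$ rather than a bounded $[0,a)$, so I cannot invoke Lemma \ref{lem-convergenceintegral} as stated (that lemma handles a finite endpoint where $\rho\to 1$). The main obstacle is therefore to show that this improper integral over $[0,\infty)$ converges to a finite limit, and moreover that the limit equals $\pi/2$, producing the asserted slab $\h^n\times(-\tfrac\pi2,\tfrac\pi2)$ and the asymptotics to $P_{\pm\pi/2}$. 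I would handle this by the substitution $u=\rho(s)$, turning the integral into $\int_0^1 du/\sqrt{1-u^2}=\arcsin(1)=\pi/2$ provided $\rho'$ stays bounded below appropriately; the exponential approach $\rho(s)\to 1$ makes $\phi'(s)=\rho/\sqrt{1-\rho^2}$ integrable at infinity, so the height converges to $\pi/2$ and the graph is asymptotic to the horizontal hyperplane $P_{\pi/2}$.

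Finally I would assemble the complete hypersurface by reflecting $\Sigma$ across $\h^n\times\{0\}$: since $\phi(0)=\phi'(0)=0$, the graph meets the slice $\h^n\times\{0\}$ orthogonally along a single horosphere, and the union $\Sigma\cup\Sigma'$ of $\Sigma$ with its reflection is a smooth, analytic, properly embedded totally umbilical hypersurface foliated by horizontal horospheres, symmetric about $\h^n\times\{0\}$, contained in $\h^n\times(-\tfrac\pi2,\tfrac\pi2)$, and asymptotic to $P_{\pm\pi/2}$. The homeomorphism type $\R^n$ follows because each leaf is a horosphere (homeomorphic to $\R^{n-1}$) and the $T$-parameter ranges over all of $\R$ after the reflection, so $\Sigma\cong\R^{n-1}\times\R=\R^n$. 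Proper embeddedness follows from the graph structure together with the fact that distinct leaves live over disjoint horosphere families, exactly as in the rotational cases. The invariance under the parabolic isometries fixing $\xi$ is immediate since those isometries permute the horospheres $f_s$ among themselves while preserving the height.
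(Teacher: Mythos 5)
Your proposal breaks down at the choice of the $\rho$-function, and the error propagates through everything that follows. Lemma \ref{lem-parallel} requires $\rho$ to solve \eqref{eq-difequation}; for the horosphere family the umbilical constant is $\lambda\equiv\pm 1$ (the sign depending on orientation), so by \eqref{eq-solutiondifequation} the only solutions are pure exponentials $\rho(s)=c\,e^{\mp s}$. Your function $\rho(s)=1-e^{-2s}$ is \emph{not} a solution (its derivative is $2e^{-2s}\neq\pm\rho$), so the resulting $(f_s,\phi)$-graph is not totally umbilical; worse, no nontrivial solution can satisfy $\rho(0)=0$, since an exponential never vanishes. This is precisely the structural feature that distinguishes the horosphere case from the rotational ones you tried to transplant: there $\lambda(s)=-\cot s$ or $-\coth s$ blows up at $s=0$ (the geodesic spheres collapse to the center), permitting solutions $c\sin s$, $c\sinh s$ that vanish at the axis and produce a horizontal point; the horosphere foliation has no degeneration and hence no horizontal point, and the correct normalization is the opposite of yours: $\rho(s)=e^{-s}$ on $(0,+\infty)$, with $\rho\to 1$ (vertical tangency) at the \emph{finite} endpoint $s=0$ and $\rho\to 0$ (horizontal flattening) as $s\to+\infty$.

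Consequently your asymptotic analysis is backwards, and the key integral actually diverges in your setup: if $\rho\to 1$ as $s\to+\infty$ then necessarily $\rho'\to 0$, and with $1-\rho^2\sim 2e^{-2s}$ one gets $\phi'(s)=\rho/\sqrt{1-\rho^2}\sim e^{s}/\sqrt{2}$, so $\phi(s)\to+\infty$ and the hypersurface lies in no slab --- your substitution $u=\rho(s)$ hides a factor $1/\rho'(s)$ that blows up. In the correct construction the improper integral sits at the finite endpoint $s=0$, where $\rho(0)=1$ and $\rho'(0)=-1<0$, so Lemma \ref{lem-convergenceintegral}(ii) applies exactly as stated (no new convergence argument is needed, contrary to the obstacle you perceived); indeed $\rho'=-\rho$ gives the closed form $\phi(s)=\frac{\pi}{2}-\arcsin\rho(s)$, whence $\phi\to\pi/2$ as $s\to+\infty$ \emph{because} $\rho\to 0$ there, which is what makes $\Sigma$ asymptotic to $P_{\pi/2}$. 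Finally, your gluing step fails on its own terms: $\phi(0)=\phi'(0)=0$ means the tangent planes along $f_0$ are \emph{horizontal} (angle function equal to $1$), not orthogonal to the slice as you wrote, and the union of such a graph with its reflection across $\h^n\times\{0\}$ would be two sheets meeting tangentially along a whole horosphere rather than a smooth embedded hypersurface. The reflection must instead be performed along the vertical-tangency locus $\rho=1$ at the finite boundary $s=0$: each half is a graph over the exterior of a horoball, and the two closures fit analytically along the boundary horosphere $\mathscr{H}\times\{0\}$, yielding the asserted hypersurface homeomorphic to $\R^{n-1}\times\R\cong\R^n$.
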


\begin{proof}
Let $\{f_s:\R^{n-1}\rightarrow\h^n\,;\, s\in (-\infty,+\infty)\}$  be a
family of parallel horospheres of $\h^n.$ Each $f_s$ is totally umbilical with
constant umbilical function equal to $1.$ So, the associated differential equation \eqref{eq-difequation}
in this case is $y'=-y.$ Clearly, the function
\[
\rho(s)=e^{-s},\,\,\, s\in(0,+\infty),
\]
is a solution of  this equation which satisfies (see Remark \ref{rem-horospheretype} below):
\begin{equation} \label{eq-rhohorosphere}
0<\rho<1, \quad \lim_{s\rightarrow 0}\rho(s)=\rho(0)=1, \quad\text{and}\quad \lim_{s\rightarrow 0}\rho'(s)=-\rho(0)=-1.
\end{equation}
In addition, from the last equality in \eqref{eq-rhohorosphere} and Lemma \ref{lem-convergenceintegral},
the function
\[
\phi(s)=\int_{a}^{s}\frac{\rho(u)}{\sqrt{1-(\rho(u))^2}}du, \,\, \,\,\,  s\in(0,+\infty),
\]
is well defined, i.e., this improper integral is convergent.

Therefore, by Lemma \ref{lem-parallel}, the correspondent
$(f_s,\phi)$-graph $\Sigma'$ is totally umbilical in
$\h^n\times\R$.
Notice that, identifying $\h^n\times\{0\}$ with $\h^n,$ we have that
$\Sigma'$ is a graph over $\h^n- B$ with boundary $\mathscr H$\,, where
$\mathscr H$ is the horosphere $f_0(\R^{n-1})$ and $B$ is the horoball bounded by
$\mathscr H$ (Fig. \ref{fig-Horograph}).  Also, since $\rho$ is analytic, so is $\Sigma'$\,.

\begin{figure}[htbp]
\includegraphics{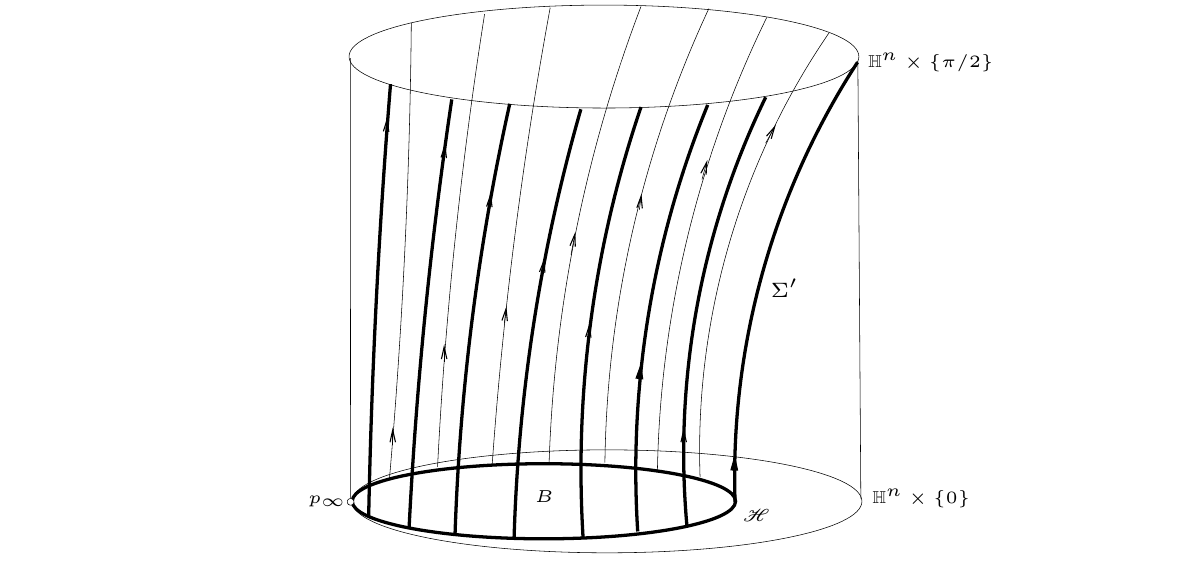}
\caption{\small All $T$-trajectories of $\Sigma'$
emanate from $\mathscr H$ orthogonally and proceed   to the boundary at infinity of
the hyperplane $\h^n\times\{\pi/2\}.$}
\label{fig-Horograph}
\end{figure}

We claim  that the function $\phi$ is bounded above by $\pi/2$ and that
$\Sigma'$ is asymptotic to the hyperplane $P_{\pi/2}=\h^n\times\{\pi/2\}.$
Indeed, since $\rho'=-\rho,$ one has
\[
\phi(s) = \int_{0}^{s}\frac{\rho(u)}{\sqrt{1-\rho^2(u)}}du = -\int_{1}^{\rho(s)}\frac{d\rho}{\sqrt{1-\rho^2}}=
\frac{\pi}{2}-\arcsin\rho(s),
\]
which proves the claim, for $\rho(s)\rightarrow 0$ as $s\rightarrow +\infty.$

Finally, since $\rho(0)=1,$  as in the previous theorems, we have that
the tangent spaces of the closure of $\Sigma'$ along its boundary $\mathscr H$ are all vertical.
Consequently, setting $\Sigma''$ for the reflection of $\Sigma'$ with respect to
$\h^n\times\{0\}$, and defining
\[
\Sigma:={\rm closure}\,(\Sigma')\cup {\rm closure}\,(\Sigma''),
\]
we can argue  just as before
and conclude that $\Sigma$ is a complete,  analytic,  properly embedded, and  totally umbilical
hypersurface of $\h^n\times\R$ which is homeomorphic to $\R^n.$ It is also clear from the construction that
$\Sigma$ is
foliated by horizontal horospheres,  is
symmetric with respect to $\h^n\times\{0\},$ and is asymptotic
to the hyperplanes  $P_{-\pi/2}$ and  $P_{\pi/2}$,
as we wished to prove.
\end{proof}

\begin{remark} \label{rem-horospheretype}
In the above proof,  taking the general solution
$\rho_c(s)=ce^{-s}$ of the equation $y'=-y$, and proceeding analogously, one obtains
a one-parameter family of \emph{isometric} totally umbilical hypersurfaces of $\h^n\times\R,$ since
the horospheres of $\h^n$ are pairwise isometric. Thus, only the case $c=1$ had to be considered.
\end{remark}

\begin{theorem}\label{th-equidistanttype}
There exists a one-parameter family $\{\Sigma(c)\,;\, 0<c<1\}$ of
totally umbilical hypersurfaces of \,$\h^n\times\R$ which are analytic,
properly embedded and  homeomorphic to $\R^n.$
Furthermore, for any $c>0,$ $\Sigma(c)$ is foliated by equidistant hypersurfaces, is periodic in
the vertical direction, and is
symmetric with respect to a discrete set of  horizontal hyperplanes.
\end{theorem}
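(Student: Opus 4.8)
The plan is to follow the pattern of Theorems \ref{th-umbilicalhyperbolic} and \ref{th-horospheretype}, this time starting from the foliation of $\h^n$ by equidistant hypersurfaces. First I would fix a totally geodesic hyperplane $\h^{n-1}\subset\h^n$ and, for $s\in\R$, let $f_s\colon\R^{n-1}\to\h^n$ denote the equidistant hypersurface at signed distance $s$ from it. This is an isoparametric family of totally umbilical hypersurfaces with umbilical constant $\lambda(s)=-\tanh s$ relative to the orientation $\eta_s=\gamma_p'(s)$, so that equation \eqref{eq-difequation} becomes $y'=\tanh(s)\,y$, whose general solution is $\rho_c(s)=c\cosh s$. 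Since $\rho_c(0)=c$, only the values $0<c<1$ can satisfy $0<\rho_c<1$; for such $c$ I would set $a=\cosh^{-1}(1/c)$ and work on the interval $I_c=(-a,a)$, where one checks directly that $0<\rho_c<1$, that $\rho_c(\pm a)=1$, and that $\rho_c'(a)=c\sinh a>0$ while $\rho_c'(-a)=-c\sinh a<0$.

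By Lemma \ref{lem-parallel}, the $(f_s,\phi)$-graph $\Sigma'$ determined by $\rho_c$ is then totally umbilical in $\h^n\times\R$, and it is analytic because $\rho_c$ is. Applying Lemma \ref{lem-convergenceintegral} at the right endpoint (case (i), since $\rho_c(a)=1$ and $\rho_c'(a)>0$) and at the left endpoint (case (ii), since $\rho_c(-a)=1$ and $\rho_c'(-a)<0$), I would conclude that the improper integral defining $\phi$ in \eqref{eq-phi10} converges at both ends, so $\phi$ is bounded. Normalizing $\phi(0)=0$ and using that $\rho_c$ is even --- hence the integrand in \eqref{eq-phi10} is even and $\phi$ is odd --- gives $\phi(\pm a)=\pm h$, where $h:=\int_0^a\rho_c(s)(1-\rho_c^2(s))^{-1/2}\,ds>0$. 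Moreover, as $s\to\pm a$ we have $\rho_c\to 1$, so by \eqref{eq-thetaparallel} and \eqref{eq-rho} the slope $\phi'$ tends to $+\infty$; thus the tangent spaces of the closure of $\Sigma'$ along the boundary equidistant hypersurfaces lying over $f_{\pm a}$, at the heights $\pm h$, become vertical.

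The hypersurface $\Sigma(c)$ is obtained by gluing, reflecting $\Sigma'$ successively across the horizontal hyperplanes $\h^n\times\{h\}$ and $\h^n\times\{-h\}$ and iterating, exactly as in Theorems \ref{th-umbilicalhyperbolic} and \ref{th-horospheretype}. Since the tangent spaces along the boundary edges are vertical, each reflection is an analytic continuation, and the resulting $\Sigma(c)$ is an analytic, properly embedded, totally umbilical hypersurface. By construction it is foliated by the horizontal equidistant hypersurfaces over the $f_s$, each homeomorphic to $\R^{n-1}$, stacked over a profile that is homeomorphic to $\R$; hence $\Sigma(c)\cong\R^{n-1}\times\R\cong\R^n$. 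It is symmetric with respect to the discrete family of horizontal hyperplanes $\h^n\times\{(2k+1)h\}$, $k\in\Z$, these being the gluing planes; and composing the reflections across $\h^n\times\{h\}$ and $\h^n\times\{-h\}$ produces the vertical translation by $4h$, so $\Sigma(c)$ is periodic in the vertical direction.

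I expect the gluing to be the main obstacle, namely to verify that the union of the reflected pieces is an embedded \emph{analytic} hypersurface and not merely a continuous one. The key point, as in the earlier theorems, is that along each boundary edge --- which sits over $f_{\pm a}$ at a height $(2k+1)h$ --- the limiting tangent space is the vertical space $T(f_{\pm a})\oplus{\rm Span}\{\partial_t\}$, which is invariant under the reflection across the corresponding horizontal hyperplane (that map fixes the horizontal directions and reverses $\partial_t$). This orthogonality is what makes each reflection an analytic continuation across the edge, so the pieces fit together analytically. Proper embeddedness then follows because nonadjacent pieces occupy disjoint height intervals while adjacent ones meet only along a common edge, so no self-intersections occur and compact sets meet only finitely many pieces.
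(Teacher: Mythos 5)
Your proposal is correct and takes essentially the same route as the paper's proof: the same isoparametric family of equidistant hypersurfaces with $\lambda(s)=-\tanh s$, the same solution $\rho_c(s)=c\cosh s$ on $(-a,a)$ with $\cosh a=1/c$, convergence of $\phi$ at both endpoints via Lemma \ref{lem-convergenceintegral}, umbilicity via Lemma \ref{lem-parallel}, and construction of $\Sigma(c)$ by reflections across the horizontal hyperplanes where the tangent spaces become vertical (your normalization $\phi(0)=0$ differs from the paper's $\phi(-a)=0$ only by a vertical translation, so your gluing planes at heights $(2k+1)h$ and period $4h=2\phi(a)$ agree with the paper's $k\phi(a)$). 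Your closing paragraph merely makes explicit the analytic-continuation and embeddedness details that the paper subsumes under ``as in the preceding theorems.''
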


\begin{proof}
Let $f_0:\R^{n-1}\rightarrow\h^n$ be a totally geodesic hyperplane of $\h^n$, and
let
\[
\mathscr F:=\{f_s:\R^{n-1}\rightarrow\h^n\,;\, s\in (-\infty,+\infty)\}
\]
be the parallel family of equidistant hypersurfaces of $f_0$
in  $\h^n.$ Each $f_s$ is totally umbilical with
constant umbilical function
\[
\lambda(s)=-\tanh(s), \,\,\, s\in (-\infty, +\infty).
\]
Thus, the associated differential equation \eqref{eq-difequation}
in this case is $y'=\tanh(s)y.$

Given $0<c<1$, let  $a>0$ be such that $\cosh (a)=1/c.$ In this setting,
\[
\rho(s)=c\cosh(s),\,\,\, s\in(-a, a),
\]
is a solution of $y'=\tanh(s)y$ which satisfies
\begin{equation} \label{eq-rhoequidistant}
0<\rho<1, \quad \lim_{s\rightarrow\pm a}\rho(s)=\rho(\pm a)=1, \quad\text{and}\quad \lim_{s\rightarrow\pm a}\rho'(s)=c\sinh (\pm a)\ne 0.
\end{equation}
In addition, from the last equality in \eqref{eq-rhoequidistant} and Lemma \ref{lem-convergenceintegral},
the function
\[
\phi(s)=\int_{-a}^{s}\frac{\rho(u)}{\sqrt{1-(\rho(u))^2}}du, \,\, \,\,\, s\in (-a,a),
\]
is well defined and is also bounded.

Therefore, by Lemma \ref{lem-parallel}, the correspondent
$(f_s,\phi)$-graph $\Sigma'(c)$ is totally umbilical in
$\h^n\times\R$.
Identifying $\h^n\times\{0\}$ with $\h^n,$ we have that
$\Sigma'(c)$ is a graph over the open convex region $\Omega_a$ of $\h^n$ which is
bounded by the equidistant hypersurfaces
$\mathscr E_{-a}:=f_{-a}(\R^{n-1})$ and $\mathscr E_{a}:=f_{a}(\R^{n-1})$\,. In particular,
$\Sigma'(c)$ is homeomorphic to $\R^n$ and has boundary
$\partial\Sigma'(c)=(\mathscr E_{-a}\times\{0\})\cup(\mathscr E_{a}\times\{\phi(a)\})$
(Fig. \ref{fig-equidistant}).
Also, as in the preceding theorems,
$\Sigma'(c)$ is analytic, as a consequence of  $\rho$ being analytic.

Now, we have just to observe that, by the second equality in \eqref{eq-rhoequidistant},
the tangent spaces of $\Sigma'(c)$ are vertical
along its boundary.
Therefore, we obtain the hypersurface $\Sigma(c)$, as stated,  by
continuously reflecting $\Sigma'(c)$  with respect to the
horizontal hyperplanes $\h^n\times \{k\phi (a)\}, \, k\in\mathbb{Z}.$
\end{proof}

\begin{figure}[htbp]
\includegraphics{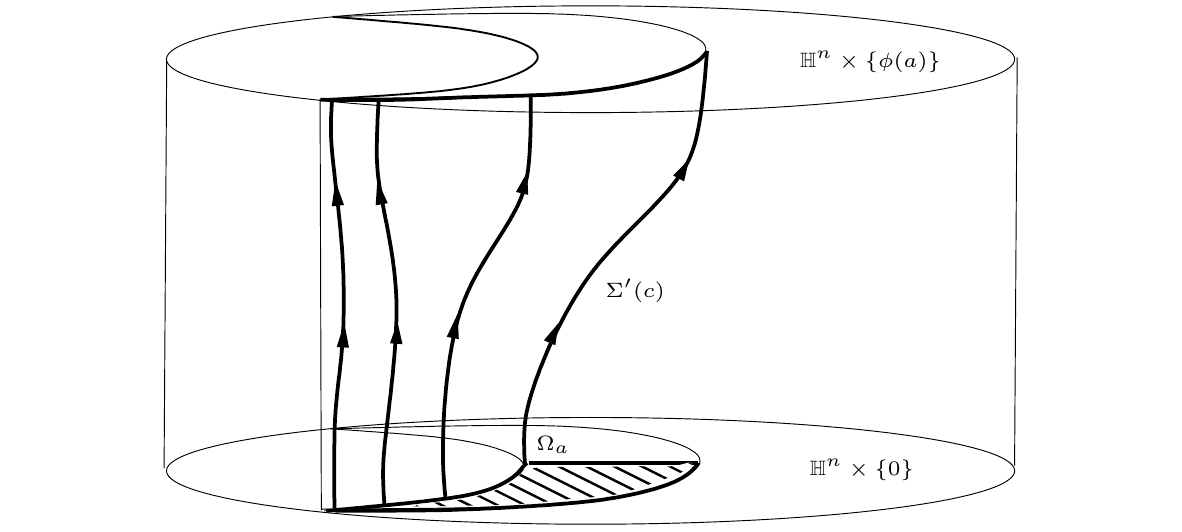}
\caption{\small Half of the graph $\Sigma'(c)$. The $T$-trajectories emanate  from $\mathscr E_{-a}\times\{0\}$ orthogonally,
and meet $\mathscr E_{a}\times\{\phi(a)\}$ orthogonally as well.}
\label{fig-equidistant}
\end{figure}

\begin{theorem} \label{th-uniqhyperbolic}
Let $\Sigma$ be a connected totally umbilical and non totally geodesic hypersurface
of \,$\h^n\times\R.$ Then, $\Sigma$ is contained in one of the complete hypersurfaces
given in Theorems \ref{th-umbilicalhyperbolic}--\ref{th-equidistanttype}.
\end{theorem}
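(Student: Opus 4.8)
The plan is to reduce the classification to a local analysis via Lemma~\ref{lem-localgraph}, followed by a determination of all admissible isoparametric families in $\h^n$, and finally a connectedness argument to globalize. First I would exploit the hypothesis that $\Sigma$ is connected and \emph{not} totally geodesic: by Theorem~\ref{th-classifMxR}, the angle function $\Theta$ cannot be constant, so neither $\Theta$ nor $T$ vanishes on any open set. Hence I may pass to an open dense subset on which $\Theta T$ never vanishes, and there invoke Lemma~\ref{lem-localgraph} to write $\Sigma$ locally as an $(f_s,\phi)$-graph over an isoparametric family $\{f_s\}$ of totally umbilical hypersurfaces of $\h^n$. This is exactly the structure exploited in the proof of Theorem~\ref{th-uniqspherical}; the new difficulty is that $\h^n$ admits \emph{three} such families rather than one.

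The key classification step is the following: the only isoparametric families of totally umbilical hypersurfaces of $\h^n$ are, up to isometry, the concentric geodesic spheres (umbilical function $-\coth s$), the parallel horospheres (umbilical function constant $\pm 1$), and the equidistant hypersurfaces to a fixed totally geodesic hyperplane (umbilical function $-\tanh s$). I would justify this by recalling that a totally umbilical hypersurface of $\h^n$ has constant umbilical function, and that the value of this constant relative to the curvature $-1$ dictates the geometric type: $|\lambda|>1$ gives geodesic spheres, $|\lambda|=1$ gives horospheres, and $|\lambda|<1$ gives equidistant hypersurfaces (the totally geodesic case $\lambda=0$ being excluded here). Since the family $\{f_s\}$ varies smoothly in $s$ and each member is one of these three types, continuity forces the whole family to be of a single type. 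Consequently, on each local graph piece $\Sigma'$, the umbilical function $\lambda(s)$ is one of $-\coth s$, $\pm 1$, or $-\tanh s$, and the $\rho$-function of $\Sigma'$ solves the corresponding equation~\eqref{eq-difequation}.

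Next I would solve~\eqref{eq-difequation} in each case using the general solution~\eqref{eq-solutiondifequation} and match against the constraint $0<\rho<1$. For geodesic spheres this yields $\rho=c\sinh s$; for horospheres, $\rho=ce^{-s}$; for equidistant hypersurfaces, $\rho=c\cosh s$. These are precisely the $\rho$-functions appearing in Theorems~\ref{th-umbilicalhyperbolic}, \ref{th-horospheretype}, and~\ref{th-equidistanttype}. Since, up to a constant, $\rho$ determines $\phi$ by~\eqref{eq-phi10}, each local graph piece $\Sigma'$ coincides, up to an ambient isometry fixing the relevant axis/hyperplane and up to vertical translation, with the corresponding piece of one of the complete hypersurfaces $\Sigma(c)$ constructed in those theorems. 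Thus $\Sigma'$ is contained in one of the listed complete hypersurfaces.

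Finally, I would globalize via connectedness, exactly as in Theorem~\ref{th-uniqspherical}: the collection of local graph pieces covers an open dense subset of $\Sigma$, each piece lies in one of the complete analytic hypersurfaces of Theorems~\ref{th-umbilicalhyperbolic}--\ref{th-equidistanttype}, and analyticity together with connectedness forces all pieces to lie in a single such hypersurface. The main obstacle I anticipate is \emph{not} the computation but the rigorous verification that the family type cannot change along $\Sigma$ — in principle the foliation by geodesic spheres could degenerate to horospheres in a limit, so I must argue that on the connected piece where $\Theta T\ne 0$ the umbilical function $\lambda(s)$ is a single smooth branch, and that the three closed-form expressions above are mutually exclusive on any open $s$-interval. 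Analyticity of $\Sigma$, inherited from the analyticity of $\rho$ and hence of $\phi$ via~\eqref{eq-phi10}, is the tool that rules out such transitions and lets the local containment propagate to the whole of $\Sigma$.
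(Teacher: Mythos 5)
Your proposal is correct and takes essentially the same route as the paper: reduce to local $(f_s,\phi)$-graphs via Lemma~\ref{lem-localgraph} (after noting that neither $\Theta$ nor $T$ can vanish on an open set, since $\Sigma$ is connected and non totally geodesic), observe that the only isoparametric families of totally umbilical hypersurfaces of $\h^n$ are the geodesic spheres, horospheres, and equidistant hypersurfaces, match the resulting $\rho$-functions with those of Theorems~\ref{th-umbilicalhyperbolic}--\ref{th-equidistanttype}, and globalize by connectedness exactly as in the proof of Theorem~\ref{th-uniqspherical}. Your additional care in ruling out a change of family type along $\Sigma$ is a point the paper leaves implicit in its appeal to ``the same reasoning,'' but it does not alter the substance of the argument.
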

\begin{proof}
We have just to observe that the only totally umbilical isoparametric hypersurfaces of
hyperbolic space $\h^n$ are geodesic spheres, horospheres, and equidistant hypersurfaces, which are
the very horizontal sections of the complete hypersurfaces constructed
in Theorems \ref{th-umbilicalhyperbolic}--\ref{th-equidistanttype}.
Hence, we get to the conclusion by applying  the same reasoning as in
the proof of Theorem \ref{th-uniqspherical}.
\end{proof}

In hyperbolic space $\h^n,$  there are three special
types of one-parameter families of
isometries. Namely,  the rotations around a fixed point (\emph{elliptic isometries}), the translations
along a fixed geodesic (\emph{hyperbolic isometries}), and the translations  along  horocycles
sharing the same point at infinity (\emph{parabolic isometries}).
It is well known that $SO(n)$ is the group of elliptic isometries of $\h^n$.
In the upper half-space model of $\h^n,$ the hyperbolic isometries are the Euclidean homotheties
from the origin, whereas parabolic isometries are horizontal Euclidean translations along fixed horizontal directions
(see, e.g., \cite[Section 10.1]{lopez}).

The isometries of $\h^n,$ in particular those described above,
extend  to isometries of $\h^n\times\R$ which  fix the factor $\R$ pointwise.
Of course, the same applies to $\s^n$, whose isometries are all elliptic.
More precisely, given
$\Phi_0\in{\rm Isom}\,(\q),$ the map
\[
\Phi(p,t)=(\Phi_0(p),t), \,\,\, (p,t)\in\q\times\R,
\]
is  an isometry of $\q\times\R.$ We call $\Phi$ \emph{elliptic, parabolic or hyperbolic} according
to wether $\Phi_0$ is elliptic, parabolic or hyperbolic.

An analogous consideration can be made in a  warped product $I\times_\omega\q,$ so that
elliptic, parabolic and hyperbolic isometries are also defined in this context.

\begin{definition} \label{def-symmetric}
A hypersurface $\Sigma$ of $\q\times\R$ or $I\times_\omega\q$ which is invariant by
an elliptic, parabolic or hyperbolic isometry will be called \emph{symmetric}.
\end{definition}

Rotational hypersurfaces are obviously invariant by elliptic isometries, which is the case of
the hypersurfaces $\Sigma(c)$ of Theorems \ref{th-umbilicalspherical} and \ref{th-umbilicalhyperbolic}.
The hypersurface $\Sigma$ of Theorem \ref{th-horospheretype} is invariant by parabolic isometries,
since its horizontal sections project vertically to families of parallel horospheres of $\h^n.$ Analogously,
the hypersurfaces $\Sigma(c)$ of Theorem \ref{th-equidistanttype} are invariant by hyperbolic isometries.
These facts, together with Theorems \ref{th-uniqspherical} and \ref{th-uniqhyperbolic}, give the following
result.

\begin{theorem} \label{th-classification}
All hypersurfaces  of Theorems \ref{th-umbilicalspherical}, \ref{th-umbilicalhyperbolic},  \ref{th-horospheretype}
and \ref{th-equidistanttype} are symmetric. Consequently, any connected nontrivial  totally umbilical hypersurface
of \,$\q\times\R$ is an open set of a properly embedded symmetric hypersurface.
\end{theorem}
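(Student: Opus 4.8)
The plan is to prove Theorem \ref{th-classification} by combining the two uniqueness results with a verification that each explicitly constructed hypersurface is invariant under one of the three distinguished one-parameter groups of isometries. I would organize the argument in two movements: first, confirm the symmetry (invariance) of each of the four families; second, deduce the global statement about arbitrary connected nontrivial totally umbilical hypersurfaces.

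For the first movement, recall that each hypersurface in Theorems \ref{th-umbilicalspherical}, \ref{th-umbilicalhyperbolic}, \ref{th-horospheretype} and \ref{th-equidistanttype} was built as a (reflected, glued) $(f_s,\phi)$-graph over a specific isoparametric foliation of $\q$. The key observation is that if a one-parameter group $\{\Phi_0^\theta\}\subset{\rm Isom}(\q)$ preserves each leaf $f_s(M_0)$ of the underlying parallel family setwise, then the extended isometry $\Phi^\theta(p,t)=(\Phi_0^\theta(p),t)$ of $\q\times\R$ preserves the graph, since the graph is determined entirely by the function $\phi$ on the leaf-parameter $s$ and by the foliation. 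I would check this leaf-by-leaf: the concentric geodesic spheres centered at $o$ in Theorems \ref{th-umbilicalspherical} and \ref{th-umbilicalhyperbolic} are each invariant under the elliptic group $SO(n)$ fixing $o$; the parallel horospheres in Theorem \ref{th-horospheretype} sharing a common point at infinity are each invariant under the parabolic group fixing that point; and the equidistant hypersurfaces from a fixed totally geodesic hyperplane in Theorem \ref{th-equidistanttype} are each invariant under the hyperbolic group of translations along the geodesic orthogonal to that hyperplane. Since reflection across horizontal hyperplanes $\q\times\{t_0\}$ commutes with these horizontal isometries (which fix the $\R$-factor pointwise), the gluing and reflection operations preserve invariance, so each complete hypersurface is symmetric in the sense of Definition \ref{def-symmetric}.

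For the second movement, let $\Sigma$ be any connected nontrivial (hence non totally geodesic, by Theorem \ref{th-classifMxR}) totally umbilical hypersurface of $\q\times\R$. By Theorem \ref{th-uniqspherical} (in the spherical case) or Theorem \ref{th-uniqhyperbolic} (in the hyperbolic case), $\Sigma$ is contained in one of the complete hypersurfaces $\widehat\Sigma$ constructed above. Since $\Sigma$ is a connected hypersurface contained in the (connected, embedded) hypersurface $\widehat\Sigma$ of the same dimension, $\Sigma$ is an open subset of $\widehat\Sigma$. As $\widehat\Sigma$ is a properly embedded symmetric hypersurface by the first movement, this is precisely the assertion.

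The main obstacle I anticipate is the first movement: making rigorous the claim that setwise invariance of each leaf under the horizontal group lifts to invariance of the assembled hypersurface. The subtlety is that one must know the group acts on the leaf-parameter $s$ trivially (it moves points within a single leaf rather than permuting leaves), so that the function $\phi(s)$ is genuinely constant along group orbits; this is exactly what guarantees $\Phi^\theta$ maps the point $(f_s(p),\phi(s))$ to another point of the same graph. For the reflected and glued pieces one additionally needs the elementary but necessary remark that the horizontal isometries commute with vertical reflections, which holds because the former fix the $\R$-factor pointwise. Once these compatibility facts are in place, the remainder is a direct citation of the uniqueness theorems together with the open-subset-of-a-connected-manifold argument.
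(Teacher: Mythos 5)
Your overall architecture is exactly the paper's: the paper proves this theorem by the paragraph preceding it, observing that the hypersurfaces of Theorems \ref{th-umbilicalspherical} and \ref{th-umbilicalhyperbolic} are rotational (hence elliptic-invariant), that the hypersurface of Theorem \ref{th-horospheretype} is parabolic-invariant because its horizontal sections project to parallel horospheres, that those of Theorem \ref{th-equidistanttype} are hyperbolic-invariant, and then citing the uniqueness Theorems \ref{th-uniqspherical} and \ref{th-uniqhyperbolic}; your ``second movement'' (containment in a complete embedded hypersurface plus the open-subset argument for a connected hypersurface of the same dimension) is the intended reading of ``these facts \dots give the following result.'' Your leaf-by-leaf lifting lemma --- setwise invariance of each leaf $f_s(M_0)$ under $\Phi_0$ implies $\Phi(p,t)=(\Phi_0(p),t)$ preserves the $(f_s,\phi)$-graph, and horizontal isometries commute with vertical reflections --- is a correct and slightly more explicit rendering of what the paper leaves implicit.

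There is, however, one concrete error in your first movement: in the equidistant case you invoke ``the hyperbolic group of translations along the geodesic orthogonal to that hyperplane.'' This step fails. A translation $T_d$ along a geodesic $\gamma$ orthogonal to the fixed totally geodesic hyperplane $H$ maps $H$ to the totally geodesic hyperplane through $\gamma(d)$ orthogonal to $\gamma$, which is \emph{not} the equidistant hypersurface $\mathscr E_d$ (the latter is umbilical with umbilical function $\tanh d\ne 0$, so it cannot be an isometric image of a totally geodesic leaf); equally, $T_d$ fixes only the two ideal endpoints of $\gamma$, whereas preserving each $\mathscr E_s$ setwise would force it to fix the common ideal boundary $\partial_\infty H$ pointwise-setwise, which it does not. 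The correct group is the family of hyperbolic translations along a geodesic \emph{contained in} $H$: in the upper half-space model with $H$ the vertical wall $\{x_1=0\}$, the equidistants are the tilted half-hyperplanes $\{x_1=c\,x_n\}$, and the Euclidean homotheties from the origin (hyperbolic translations along the vertical geodesic, which lies in $H$) preserve each of these setwise --- this matches the paper's description of hyperbolic isometries. With this one-line repair, your argument is complete and coincides with the paper's proof.
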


We should mention that, in the spirit of the above theorem,
a local description of the totally umbilical hypersurfaces of $\s^n\times\R$ and $\h^n\times\R$ by means
of rotating curves was given in \cite{vekenetal1,vekenetal}.

\section{Totally Umbilical Hypersurfaces of $I\times_\omega\q.$} \label{sec-Umbilicalwarp}
In what follows, we extend the results of the previous section to the context of
warped products $I\times_\omega\q,$ where $I\subset\R$ is an open interval.
In this setting,  let us  consider
again the conformal diffeomorphism
\begin{equation} \label{eq-varphi}
\begin{array}{cccc}
\varphi\colon & I\times_\omega\q   & \rightarrow & \q\times J\\[1ex]
              & (t,p) & \mapsto     & (p,F(t))
\end{array},
\end{equation}
where $F$ is the diffeomorphism
\begin{equation} \label{eq-F}
F:I\rightarrow J=F(I), \,\,\, F'=1/\omega.
\end{equation}
We will assume, without loss of generality,
that
\begin{equation} \label{eq-Jinterval}
J=(-\delta,\delta),  \,\,\, 0<\delta\le +\infty.
\end{equation}

As we have already pointed out,  the  property of being
totally umbilical is preserved by the diffeomorphism $\varphi.$
Furthermore, since the conformal factor of $\varphi$ depends only on $t,$
and $\varphi(\{t\}\times_\omega\q)=\q\times\{F(t)\},$ we have that
$\{t\}\times_\omega\q$ is homothetic to $\q\times\{F(t)\}.$
From these considerations, we get the following result.



\begin{lemma} \label{lem-varphi}
Let $\varphi$ be the conformal diffeomorphism defined in \eqref{eq-varphi}.
Given a hypersurface $\Sigma$ in $I\times_\omega\q,$ the following assertions hold:
\begin{itemize}[parsep=1ex]
\item[\rm i)] $\Sigma$ is totally umbilical in $I\times_\omega\q$  if and only if $\varphi(\Sigma)$ is totally umbilical in $\q\times J.$
\item[\rm ii)] A vertical section $\Sigma_t\subset\Sigma$ is a sphere
(resp. horosphere, equidistant hypersurface) of $\{t\}\times_\omega\q$ if and only if
$\varphi(\Sigma_t)$ is a horizontal section of $\varphi(\Sigma)$ which is a sphere
(resp. horosphere, equidistant hypersurface) of $\q\times\{F(t)\}.$ Consequently,
$\Sigma$ is symmetric in $I\times_\omega\q$ if and only if $\varphi(\Sigma)$ is symmetric in
$\q\times J.$
\end{itemize}
\end{lemma}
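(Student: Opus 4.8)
The plan is to handle the two items in turn, reducing everything to the single fact---recorded just before Theorem~\ref{th-classifMxRwarped}---that $\varphi$ is a conformal diffeomorphism, together with the explicit form of $\varphi$ in \eqref{eq-varphi}. Item (i) is then immediate: since total umbilicity of a hypersurface is preserved under conformal diffeomorphisms of the ambient space, $\Sigma$ is umbilical in $I\times_\omega\q$ precisely when $\varphi(\Sigma)$ is umbilical in $\q\times J$. No computation is needed beyond invoking this well-known fact.

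For the first assertion of item (ii) I would argue in two steps. First, $\varphi$ is a diffeomorphism carrying the vertical hyperplane $\{t\}\times_\omega\q$ onto the horizontal hyperplane $\q\times\{F(t)\}$; since diffeomorphisms preserve transversality, one has $\varphi(\Sigma_t)=\varphi(\Sigma\transv(\{t\}\times_\omega\q))=\varphi(\Sigma)\transv(\q\times\{F(t)\})$, which is by definition a horizontal section of $\varphi(\Sigma)$. Second, because the conformal factor of $\varphi$ depends only on $t$, the restriction $\varphi|_{\{t\}\times_\omega\q}$ is a homothety onto $\q\times\{F(t)\}$, as already noted in the text. The key point is that a homothety preserves the three types of totally umbilical hypersurfaces of a space form: it maps geodesics to geodesics, points to points, and totally geodesic hyperplanes to totally geodesic hyperplanes, while rescaling all distances by a constant factor. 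Consequently it sends geodesic spheres to geodesic spheres, Busemann level sets (horospheres) to horospheres, and sets equidistant from a totally geodesic hyperplane to sets of the same kind. Equivalently, the trichotomy is governed by the homothety-invariant ratio of the constant umbilical function to $\sqrt{|K|}$, which yields the stated equivalence for each of the three types.

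It remains to deduce the symmetry statement. The cleanest route is to observe that $\varphi$ conjugates the ambient isometries used to define symmetry. Given $\Phi_0\in{\rm Isom}(\q)$, let $\Phi(t,p)=(t,\Phi_0(p))$ and $\tilde\Phi(p,s)=(\Phi_0(p),s)$ be the induced isometries of $I\times_\omega\q$ and $\q\times J$, respectively; a direct check gives $\varphi\circ\Phi=\tilde\Phi\circ\varphi$. Since $\Phi$ and $\tilde\Phi$ are of the same type (elliptic, parabolic, or hyperbolic, according to $\Phi_0$), invariance of $\Sigma$ under a symmetric isometry is equivalent to invariance of $\varphi(\Sigma)$ under the corresponding one. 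Hence $\Sigma$ is symmetric in $I\times_\omega\q$ if and only if $\varphi(\Sigma)$ is symmetric in $\q\times J$. Alternatively, this can be read off directly from the first part of (ii), since symmetry amounts to being foliated by sections of a single one of the three types, a property transported by $\varphi$.

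I expect the only genuinely delicate point to be the homothety-invariance of the sphere/horosphere/equidistant trichotomy in the hyperbolic case: here one must verify that rescaling the metric moves both $|\lambda|$ and $\sqrt{|K|}$ by the same factor, so that the comparison between them---and hence the classification---is left unchanged. In the spherical case there is nothing to check beyond geodesic spheres, and the conjugation identity $\varphi\circ\Phi=\tilde\Phi\circ\varphi$ is a routine verification once the coordinate conventions for the two product structures are kept straight.
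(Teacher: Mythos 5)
Your proposal is correct and follows essentially the same route as the paper, which states the lemma as an immediate consequence of exactly the two observations you begin with: total umbilicity is preserved under conformal diffeomorphisms, and since the conformal factor of $\varphi$ depends only on $t$, each vertical slice $\{t\}\times_\omega\q$ is carried homothetically onto the horizontal slice $\q\times\{F(t)\}$. The details you supply --- the homothety-invariance of the sphere/horosphere/equidistant trichotomy and the conjugation identity $\varphi\circ\Phi=\tilde\Phi\circ\varphi$ for the symmetry assertion --- are sound and simply make explicit what the paper leaves to the reader.
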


Now, we are in position to construct and classify nontrivial totally umbilical
hypersurfaces of $I\times_\omega\q.$ Firstly, let us recall that
the \emph{vertical diameter} of a compact hypersurface of $M\times\R$ is
defined as the difference between the maximum and minimum values of its  height function $\xi.$

\begin{theorem} \label{th-warprotacional}
Given a warping function $\omega,$ there exists a one-parameter family $\{\Sigma(c)\,;\, c>0\}$ of
embedded rotational totally umbilical hypersurfaces of $I\times_\omega\q$ with the following properties:
\begin{itemize}[parsep=1ex]
\item[\rm i)] For $\epsilon=1$ and $c=1,$  $\Sigma(c)$ is a graph over an open convex geodesic ball of \,$\s^n$.
In particular, $\Sigma$ is homeomorphic to $\R^n$.
\item[\rm ii)] For $\epsilon=1$ and $c\ne 1,$ or $\epsilon=-1$ and $c>0$, $\Sigma(c)$ is either homeomorphic to an
$n$-annulus or to the $n$-sphere $\s^n.$ The latter occurs if and only if
the vertical diameter of $\varphi(\Sigma(c))$ is less than $2\delta,$  where
$\delta$ is as in \eqref{eq-Jinterval}.
\end{itemize}
\end{theorem}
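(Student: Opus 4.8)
The plan is to reduce the whole statement to the product case already settled in Theorems \ref{th-umbilicalspherical} and \ref{th-umbilicalhyperbolic}, transporting those hypersurfaces through the conformal diffeomorphism $\varphi\colon I\times_\omega\q\to\q\times J$ of \eqref{eq-varphi}. Concretely, for each $c>0$ let $\widehat\Sigma(c)\subset\q\times\R$ be the rotational totally umbilical hypersurface (built on the foliation of $\q$ by concentric geodesic spheres) produced by Theorem \ref{th-umbilicalspherical} when $\epsilon=1$ and by Theorem \ref{th-umbilicalhyperbolic} when $\epsilon=-1$. Using the vertical‑translation isometry of $\q\times\R$, I would position the symmetric ones so that their hyperplane of symmetry is $\q\times\{0\}$, which leaves the vertical diameter unchanged. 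Since $\q\times J$ is an open subset of $\q\times\R$ carrying the same metric, $\widehat\Sigma(c)\cap(\q\times J)$ is totally umbilical there, and I set
\[
\Sigma(c):=\varphi^{-1}\big(\widehat\Sigma(c)\cap(\q\times J)\big).
\]
By Lemma \ref{lem-varphi}(i), $\Sigma(c)$ is totally umbilical in $I\times_\omega\q$; by part (ii) of the same lemma — which carries horizontal sections that are spheres to vertical sections that are spheres — it is foliated by geodesic spheres of the slices $\{t\}\times_\omega\q$, hence rotational. Embeddedness is inherited, because $\varphi$ is a diffeomorphism and the models are embedded in $\q\times\R$.

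It then remains to read off the topology of $\Sigma(c)$, which coincides with that of $\widehat\Sigma(c)\cap(\q\times J)$, from how the complete model sits relative to the slab $\q\times J=\q\times(-\delta,\delta)$ of \eqref{eq-Jinterval}. For $\epsilon=1$ and $c=1$ the model is the unbounded graph over an open hemisphere whose height function $\phi$ increases to $+\infty$; imposing $\phi<\delta$ truncates it at a geodesic sphere of radius $s_\delta<\pi/2$ (the full hemisphere when $\delta=+\infty$), so $\Sigma(c)$ is a graph over an open convex geodesic ball and is homeomorphic to $\R^n$, giving (i). In every remaining case ($\epsilon=1,\ c\ne 1$ and $\epsilon=-1,\ c>0$) the model $\widehat\Sigma(c)$ is the closed rotational sphere of finite vertical diameter $2t_0(c)$, symmetric about $\q\times\{0\}$, with its two poles on the axis at heights $\pm t_0(c)$ and its equator at height $0$. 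If $2t_0(c)<2\delta$ the whole sphere lies in the open slab, so $\Sigma(c)\cong\s^n$; otherwise the two polar caps $\{|\xi|\ge\delta\}$ lie outside $\q\times J$ and are deleted, leaving a hypersurface homeomorphic to $\s^{n-1}\times(0,1)$, i.e.\ an $n$‑annulus. This establishes the equivalence in (ii).

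The routine assertions (umbilicity, rotational invariance, embeddedness) are immediate from Lemma \ref{lem-varphi} and the two earlier theorems; the step to get right is the topological trichotomy, and in particular the \emph{strictness} of the inequality in (ii). The sphere survives intact exactly when both poles remain interior to the open slab, and the boundary case $2t_0(c)=2\delta$ is the crux: then the poles $(o,\pm\delta)$ lie on $\partial J$ and are excluded, so $\Sigma(c)$ is again an annulus — which is precisely why $\s^n$ occurs if and only if $2t_0(c)<2\delta$. A final point worth flagging is that this diameter must be measured in the product $\q\times J$, where the slices are homothetic to $\q$, rather than in $I\times_\omega\q$, since $\varphi$ rescales heights through $F$ with $F'=1/\omega$ as in \eqref{eq-F}; this is exactly why the criterion is phrased in terms of the vertical diameter of $\varphi(\Sigma(c))$.
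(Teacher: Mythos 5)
Your proposal is correct and follows essentially the same route as the paper's proof: pull back the model hypersurfaces of Theorems \ref{th-umbilicalspherical} and \ref{th-umbilicalhyperbolic}, intersected with the slab $\q\times(-\delta,\delta)$, through the conformal diffeomorphism $\varphi$ of \eqref{eq-varphi}, invoking Lemma \ref{lem-varphi} for umbilicity and rotational invariance and reading off the topology from the vertical diameter. Your explicit treatment of the borderline case (diameter exactly $2\delta$, where the two poles are deleted and an annulus results) is in fact slightly more careful than the paper's one-line dichotomy.
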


\begin{proof}
(i) Let $\varphi$ be the map defined in \eqref{eq-varphi}, and
denote by $\Sigma_0$  the part of the totally umbilical graph of $\s^n\times\R$
obtained in Theorem \ref{th-umbilicalspherical}-(ii) which
is contained in $\s^n\times J.$ Since the whole graph is
defined over an open hemisphere of $\s^n$ centered at a point $o\in\s^n,$
we have that $\Sigma_0$ is defined on a maximal convex open
ball $B\subset\s^n$ centered at $o.$
Therefore, by Lemma \ref{lem-varphi},  $\Sigma:=\varphi^{-1}(\Sigma_0)$
is a rotational totally umbilical graph
in $I\times_\omega\s^n$ over $B.$

\vt

\noindent
(ii) We consider only the case $\epsilon=1,$ since the proof for the case $\epsilon=-1$ is completely analogous.
Let $\{\Sigma_0(c)\,;\, c>0\}$ be the family of totally umbilical spheres of $\s^n\times\R$
obtained in Theorem \ref{th-umbilicalspherical}-(iii).  Given $c>0,$ after a vertical translation, we can assume that
$\Sigma_0(c)$ is symmetric with respect to
the  hyperplane $\s^n\times\{0\}.$ In this way, \[\Sigma_0'(c):=\Sigma_0(c)\cap(\s^n\times(-\delta,\delta))\] is
clearly homeomorphic to an $n$-annulus if the vertical diameter
of $\Sigma_0(c)$ is at least $2\delta.$ Otherwise, $\Sigma_0'(c)$ coincides with
$\Sigma_0(c)$. Therefore,
$\Sigma(c):=\varphi^{-1}(\Sigma_0'(c))$ is
an embedded totally umbilical hypersurface of $I\times_\omega\s^n$
which is homeomorphic to an $n$-annulus
or to the $n$-sphere $\s^n.$ This finishes the proof.
\end{proof}

\begin{theorem}\label{th-warpequidistant-horosphere}
Given a warped product $I\times_\omega\h^n,$ the following hold:
\begin{itemize}[parsep=1ex]
\item[\rm i)] There exists an
embedded totally umbilical hypersurface $\Sigma$ of $I\times_\omega\h^n$ which
is foliated by horospheres and is homeomorphic to $\R^n$. In addition,
$\Sigma$ is complete, provided  $\delta>\pi/2.$
\item[\rm ii)] There exists a one-parameter family $\{\Sigma(c)\,;\, 0<c<1\}$ of
embedded totally umbilical hypersurfaces of $I\times_\omega\h^n$ such that each member
$\Sigma(c)$
is foliated by equidistant hypersurfaces and is homeomorphic to $\R^n$.
\end{itemize}
\end{theorem}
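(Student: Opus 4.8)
The plan is to reduce both items to the already-settled product case $\h^n\times\R$ by means of the conformal diffeomorphism $\varphi$ of \eqref{eq-varphi}, precisely as in the proof of Theorem \ref{th-warprotacional}. Concretely, I would pull back via $\varphi^{-1}$ the horosphere-foliated hypersurface of Theorem \ref{th-horospheretype} for item (i), and the equidistant-foliated hypersurfaces of Theorem \ref{th-equidistanttype} for item (ii), and then read off their properties through Lemma \ref{lem-varphi}. Throughout, recall that $J=F(I)=(-\delta,\delta)$ by \eqref{eq-Jinterval}.

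For item (i), I would let $\Sigma_0$ be the complete horosphere-foliated hypersurface of $\h^n\times\R$ furnished by Theorem \ref{th-horospheretype}, which is contained in the slab $\h^n\times(-\pi/2,\pi/2)$ and symmetric about $\h^n\times\{0\}$. Setting $\Sigma_0':=\Sigma_0\cap(\h^n\times J)$, this is a connected, symmetric portion of $\Sigma_0$ containing the waist horosphere $\mathscr H\times\{0\}$; writing each half as a graph in the horospherical parameter $s$ with strictly increasing height $\phi(s)$, one sees that $\Sigma_0'$ is carried homeomorphically onto a set of the form $(-s_\delta,s_\delta)\times\R^{n-1}$, hence is homeomorphic to $\R^n$. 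By Lemma \ref{lem-varphi}(i), $\Sigma:=\varphi^{-1}(\Sigma_0')$ is totally umbilical in $I\times_\omega\h^n$; by Lemma \ref{lem-varphi}(ii) it is foliated by horospheres; and since $\varphi$ is a diffeomorphism, $\Sigma$ is embedded and homeomorphic to $\R^n$.

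The one genuinely new point, and the step I expect to be the \emph{main obstacle}, is completeness under the hypothesis $\delta>\pi/2$. In that case the closed slab $[-\pi/2,\pi/2]$ lies strictly inside $J=F(I)$, so $\Sigma_0'=\Sigma_0$ and the heights of $\Sigma_0$ correspond, via $t=F^{-1}(\,\cdot\,)$, only to values of $t$ in the compact subinterval $[F^{-1}(-\pi/2),F^{-1}(\pi/2)]\subset I$, on which $\omega$ attains a positive minimum $\omega_0$. Comparing the induced metrics, $\varphi\colon\Sigma\to\Sigma_0$ is a conformal diffeomorphism with $\|v\|_{I\times_\omega\h^n}=\omega\,\|d\varphi(v)\|_{\h^n\times\R}\ge\omega_0\,\|d\varphi(v)\|_{\h^n\times\R}$ along $\Sigma$. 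Hence any divergent curve in $\Sigma$ is carried by $\varphi$ to a divergent curve in the complete hypersurface $\Sigma_0$, and its $I\times_\omega\h^n$-length is at least $\omega_0$ times the (infinite) length of its image, so $\Sigma$ is complete. The delicate input is exactly that $\delta>\pi/2$ confines the relevant $t$-range to a compact subset of $I$, keeping $\omega$ bounded below; for $\delta\le\pi/2$ this argument breaks down, which is why completeness is only asserted in that range.

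For item (ii) the reasoning is entirely analogous but requires no completeness discussion. Given $0<c<1$, I would take the monotone graph $\Sigma_0'(c)$ over the convex region $\Omega_a$ produced in Theorem \ref{th-equidistanttype}, which is totally umbilical, foliated by equidistant hypersurfaces, and homeomorphic to $\R^n$; after a vertical translation centering it about $\h^n\times\{0\}$, I would replace it by its intersection with $\h^n\times J$, a cut that merely trims a connected monotone graph and so preserves homeomorphy to $\R^n$. Applying $\varphi^{-1}$ and invoking Lemma \ref{lem-varphi} then yields an embedded totally umbilical hypersurface $\Sigma(c)$ of $I\times_\omega\h^n$ foliated by equidistant hypersurfaces and homeomorphic to $\R^n$. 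Unlike the rotational case of Theorem \ref{th-warprotacional}, no $\s^n$ alternative arises here, since the equidistant leaves are noncompact and the foliated hypersurface cannot close up.
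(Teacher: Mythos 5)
Your proposal is correct and follows essentially the same route as the paper: both items are obtained by pulling back the hypersurfaces of Theorems \ref{th-horospheretype} and \ref{th-equidistanttype} through the conformal diffeomorphism $\varphi$ of \eqref{eq-varphi} and reading off umbilicity, the foliations, and embeddedness from Lemma \ref{lem-varphi}. Your completeness argument for $\delta>\pi/2$ (the $t$-range is confined to a compact subinterval of $I$, so $\omega$ is bounded below and divergent curves in $\Sigma$ have infinite length) is simply a more detailed rendering of the paper's one-line observation that $\Sigma_0$ is bounded away from the boundary of $\h^n\times(-\delta,\delta)$, hence $\Sigma$ is bounded away from the boundary of $I\times_\omega\h^n$ and therefore complete.
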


\begin{proof}
We only show (i), since the proof of (ii) is analogous.
As in the preceding proof,
consider the map $\varphi$ defined in \eqref{eq-varphi}. Denote by
$\Sigma_0$  the part of the totally umbilical hypersurface of $\h^n\times\R$
obtained in Theorem \ref{th-horospheretype} which is contained in $\h^n\times (-\delta,\delta).$
Clearly, $\Sigma_0$ is homeomorphic to $\R^n.$ Thus,
by Lemma  \ref{lem-varphi}, the hypersurface $\Sigma:=\varphi^{-1}(\Sigma_0)$ is
embedded and totally umbilical in $I\times_\omega\h^n$. In addition,
$\Sigma$ is homeomorphic to $\R^n$ and is foliated by horospheres.

Finally, if $\delta>\pi/2$, by Theorem \ref{th-horospheretype},  $\Sigma_0$ is complete and
bounded away from the boundary of $\h^n\times (-\delta,\delta).$ Thus,
$\Sigma$ is bounded away from the boundary of $I\times_\omega\h^n$,
which implies that it is complete. This finishes the proof.
\end{proof}

The above results, together with
Theorem \ref{th-classification},
give the following characterization of totally umbilical hypersurfaces of
warped products $I\times_\omega\q.$

\begin{theorem} \label{th-warpedclassification}
Any connected nontrivial totally umbilical hypersurface
of $I\times_\omega\q$ is necessarily an open set of an embedded symmetric
totally umbilical hypersurface.
\end{theorem}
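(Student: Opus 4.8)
The plan is to reduce the statement to the product case already settled in Theorem \ref{th-classification} by transporting $\Sigma$ through the conformal diffeomorphism $\varphi$ of \eqref{eq-varphi} and then carrying the resulting object back. So let $\Sigma\subset I\times_\omega\q$ be connected, nontrivial, and totally umbilical, and consider its image $\varphi(\Sigma)\subset\q\times J\subset\q\times\R$. By Lemma \ref{lem-varphi}-(i), $\varphi(\Sigma)$ is totally umbilical, and it is connected as the diffeomorphic image of a connected set. The point that needs checking is that $\varphi(\Sigma)$ is also \emph{nontrivial}: the trivial umbilical hypersurfaces of $I\times_\omega\q$ are the vertical hyperplanes $\{t\}\times_\omega\q$ and the cylinders $I\times_\omega\Sigma_0$ over totally geodesic $\Sigma_0\subset\q$, and $\varphi$ carries these respectively to $\q\times\{F(t)\}$ and to $\Sigma_0\times J$, that is, to (open subsets of) trivial umbilical hypersurfaces of $\q\times\R$. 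Since $\varphi$ is a diffeomorphism preserving umbilicity and matching each trivial type bijectively with a trivial type, $\Sigma$ nontrivial forces $\varphi(\Sigma)$ nontrivial.

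Next I would invoke Theorem \ref{th-classification} in $\q\times\R$: the connected nontrivial totally umbilical hypersurface $\varphi(\Sigma)$ is an open subset of a properly embedded symmetric totally umbilical hypersurface $\widehat\Sigma\subset\q\times\R$. Setting $\widehat\Sigma_J:=\widehat\Sigma\cap(\q\times J)$, this is an embedded totally umbilical hypersurface of $\q\times J$ that contains $\varphi(\Sigma)$ as an open set. Its symmetry survives the restriction to the slab, because the defining isometry $\Phi(p,t)=(\Phi_0(p),t)$ of $\widehat\Sigma$ fixes the $\R$-factor, hence preserves $\q\times J$ and therefore leaves $\widehat\Sigma_J$ invariant; thus $\widehat\Sigma_J$ is symmetric in $\q\times J$ in the sense of Lemma \ref{lem-varphi}-(ii). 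Finally I would transport back: by Lemma \ref{lem-varphi}-(i) and (ii), the hypersurface $\widetilde\Sigma:=\varphi^{-1}(\widehat\Sigma_J)$ is totally umbilical and symmetric in $I\times_\omega\q$, and it is embedded, being the diffeomorphic image of the embedded $\widehat\Sigma_J$ under $\varphi^{-1}$. Since $\varphi(\Sigma)$ is open in $\widehat\Sigma_J$, its preimage $\Sigma$ is open in $\widetilde\Sigma$, which is precisely the desired conclusion.

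I expect the only genuinely delicate point to be the bookkeeping around symmetry: one must verify both that symmetry is not lost when $\widehat\Sigma$ is cut down to the slab $\q\times J$ and that it is faithfully returned to $I\times_\omega\q$ by $\varphi^{-1}$. Both reduce to the observation that the relevant isometries act only on the $\q$-factor and so commute with the slicing and with $\varphi$; once this is in place, the argument is a purely formal transport of the product-case classification, with the nontriviality check being the only other place where care is required.
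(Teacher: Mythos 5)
Your proposal is correct and takes essentially the same route as the paper, which obtains this theorem by transporting $\Sigma$ through the conformal diffeomorphism $\varphi$, applying the product classification of Theorem \ref{th-classification} in $\q\times J$, and pulling the slab-restricted symmetric hypersurface back via Lemma \ref{lem-varphi} --- the constructions of Theorems \ref{th-warprotacional} and \ref{th-warpequidistant-horosphere} being exactly your $\widetilde\Sigma=\varphi^{-1}\bigl(\widehat\Sigma\cap(\q\times J)\bigr)$. Your explicit checks of nontriviality and of the persistence of symmetry under slicing and under $\varphi^{-1}$ merely spell out what the paper leaves implicit.
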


To illustrate Theorems \ref{th-warprotacional} and \ref{th-warpequidistant-horosphere},
 let us consider the warping functions
\[
\omega_1(t)=t, \, t\in I_1=(0,+\infty), \,\,\, \text{and} \,\,\, \omega_2(t)=e^{-t}, \, t\in I_2=(-\infty,+\infty).
\]
The associated functions $F_1$\,, $F_2$ satisfying $F_i'=1/\omega_i$ are
\[
F_1(t)=\log t, \, t\in I_1, \,\,\,\, \text{and} \,\,\,\,  F_2(t)=e^{t}, \, t\in I_2.
\]
Since the range of both $F_1$ and $F_2$ is $\R,$ and $I_1$ and $I_2$ are unbounded,
the totally umbilical hypersurfaces of $I_i\times_{\omega_i}\q$
from Theorems \ref{th-warprotacional} and \ref{th-warpequidistant-horosphere} are all complete,
except for the case (ii) in Theorem \ref{th-warpequidistant-horosphere} applied to $I_1\times_{\omega_1}\q$.


\end{document}